 \renewcommand{\equation}
\newtheorem{prop}{Proposition}[section]
\newtheorem{lem}{Lemma}[section]
\newtheorem{thm}{Theorem}[section]
\newtheorem{fact}{Fact}[section]
\theoremstyle{definition}
\newtheorem{rmk}{Remark}[section]
\newtheorem{defini}{Definition}[section]
\title{The JSJ-decomposition of the 3-manifold obtained by $0$-surgery along a classical pretzel knot of genus one}
\author{Nozomu Sekino}
\date{}
\begin{document}
\maketitle

\begin{abstract}
We consider the JSJ-decomposition of the 3-manifold obtained by $0$-surgery along a classical pretzel knot of genus one. 
We use the classification of exceptional fillings of minimally twisted five-chain links by B. Martelli, C. Petronio and F. Roukema. 
\end{abstract}

\section{Introduction} \label{secintro}
For a knot in $S^{3}$ and a Seifert surface of minimal genus of it, there exists a taut finite depth foliation on the exterior of the knot such that the surface is a leaf (Theorem 3.1 of \cite{gabai}). 
Moreover, this foliation gives a foliation of the 3-manifold obtained by $0$-surgery along the knot by capping off the boundary of each leaf of the foliation (Corollary 8.2 of \cite{gabai}). 
On another hand, a problem of {\it characterizing slope} of a knot sometimes arises. 
This asks whether a Dehn surgery of given slope along a knot determines the knot or not. 
A rational number $r$ is called a {\it characterizing slope} of a knot $K$ if the 3-manifold obtained by $r$-surgery along a knot $J$ being homeomorphic to that of $K$ implies that $J$ is equivalent to $K$. 
There are many works toward this. 
For example, it is shown that every knot admits infinitely many characterizing slopes \cite{lackenby}. 
Concretely characterizing slopes and knots are constructed \cite{ozsvath} \cite{baldwin} \cite{baldwin2}, and so on. 
Also, non-characterizing slopes (i.e. slopes of inequivalent knots giving homeomorphic 3-manifolds) for some knots are also constructed \cite{brakes} \cite{osoinach} \cite{abe} \cite{yasui} \cite{miller} \cite{piccirillo} \cite{piccirillo2} \cite{manolescu}, and so on. 
For researches of characterizing slopes, it may be helpful to understand the structure of the 3-manifold obtained by a Dehn surgery along a knot. 
In this paper, together with a foliation stated above, we are interested in $0$-surgery (or longitudinal surgery) along a knot. 
Especially, we consider $0$-surgeries along classical pretzel knots (Definition~\ref{classical}) of genus one. 
There are two decomposing ways of 3-manifolds into simpler pieces. 
One is prime decomposition and the other is JSJ-decomposition (or torus decomposition) (Definition~\ref{jsj}). 
Note that a 3-manifold obtained by $0$-surgery along a knot is prime. 
About JSJ-decomposition, a 3-manifold obtained by $0$-surgery along a knot of genus one has an essential torus since the resultant of capping off a Seifert surface of minimal genus is essential \cite{scharlemann}. 
Thus decomposing tori of the JSJ-decomposition of a 3-manifold obtained by $0$-surgery along a knot of genus one is non-empty unless the entire manifold is a Seifert manifold. 
Moreover by using the result of \cite{ichihara}, we know that this entire manifold is a Seifert manifold if and only if the pretzel knot is a trefoil. 
Our result is the following.

\begin{thm}\label{main}
Let $\mathcal{T}$ be the set of decomposing tori of the JSJ-decomposition of the 3-manifold obtained by $0$-surgery along a pretzel knot $P_{2p+1,2q+1,2r+1}$. 
Then we have the following. 
\begin{itemize}
\item[(1.1.1)] If $\{-1, 0\} \subset \{p,q,r\}$, then $\mathcal{T}=\phi$ and the entire manifold is $S^{2}\times S^1$,  
\item[(1.1.2)] if $\{p,q,r\}=\{-1,\epsilon, \epsilon\}$ or $\{0,\epsilon-1,\epsilon-1\}$ for $\epsilon \in \{\pm1\}$, then $\mathcal{T}=\phi$ and the entire manifold is a closed torus bundle over $S^1$ whose monodromy is periodic of order six, 
\item[(1.1.3)] if $\{p,q,r\}=\{-1,\epsilon,k\}$ or $\{0,\epsilon-1,k-1\}$ for $\epsilon \in \{\pm1\}$ and some integer $k$ and not in the cases above, then $|\mathcal{T}|=1$ and the decomposed piece is a Seifert manifold whose base surface is an annulus with one exceptional point of order $|k|$, 
\item[(1.1.4)] if $\{p,q,r\}=\{-1,m,n\}$ or $\{0,m-1,n-1\}$ for some integers $m$ and $n$ and not in the cases above, then $|\mathcal{T}|=2$ and the decomposed pieces are two Seifert manifolds each of whose base surfaces are annuli with one exceptional points of order $|m|$ and $|n|$, 
\item[(1.1.5)] if $\{-2,1\}\subset \{p,q,r\}$, then $|\mathcal{T}|=1$ and the decomposed piece is a Seifert manifold whose base surface is an annulus with one exceptional point of order $2$, 
\item[(1.1.6)] if $\{p,q,r\}=\{-2,2,2\}$ or $\{-3,-3,1\}$, then $|\mathcal{T}|=2$ and the decomposed pieces are two Seifert manifolds, one of which is $\Sigma_{0,3}\times S^1$ and the other is the complement of the trefoil, 
\item[(1.1.7)] if $\{p,q,,r\}$ is in none of the above cases, then $|\mathcal{T}|=1$ and the decomposed piece is a hyperbolic manifold. 
\end{itemize}
\end{thm}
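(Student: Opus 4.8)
**

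The plan is to set up genus-one pretzel knots $P_{2p+1,2q+1,2r+1}$ so that their exteriors appear as fillings of a fixed, well-understood link, and then transfer the exceptional-filling classification of Martelli--Petronio--Roukema to read off the JSJ-decomposition of the $0$-surgered manifolds. Concretely, I would first record the standard genus-one Seifert surface for $P_{2p+1,2q+1,2r+1}$ (two bands, so genus one) and note the well-known symmetry $P_{2p+1,2q+1,2r+1}$ is unchanged under permuting the three parameters; this already explains why all the cases in Theorem~\ref{main} are phrased in terms of the unordered set $\{p,q,r\}$, and the mysterious parallel alternatives ($\{-1,\ldots\}$ versus $\{0,\ldots\}$) should come from a shift symmetry of the surgered manifold relating parameter $p$ and $p{+}1$. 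I would prove those symmetries at the outset so that later I only need to treat one representative in each family.

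Next I would realize the knot exterior as a Dehn filling of the minimally twisted five-chain link complement $M$. The key geometric input is that capping off a minimal-genus Seifert surface in the $0$-surgered manifold yields an essential torus (by \cite{scharlemann}), so the JSJ pieces are governed by how the complementary solid-torus-plus-surface decomposition sits inside $M$ after filling. The main structural step is therefore: exhibit an explicit sequence of fillings on $M$ whose result is the $0$-surgery on $P_{2p+1,2q+1,2r+1}$, with the three ``twist region'' cusps filled along slopes determined by $p,q,r$ and the remaining cusps filled to perform the $0$-framing. Then the Martelli--Petronio--Roukema list tells me exactly which choices of slopes produce exceptional (non-hyperbolic) fillings, and what the resulting pieces are (Seifert fibered, reducible, toroidal, or the few sporadic small-volume manifolds).

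With that dictionary in hand, the proof becomes a finite case analysis driven entirely by the filling slopes. For generic $(p,q,r)$ the relevant fillings are all hyperbolic, giving case (1.1.7); the degenerations occur exactly when one of the twist-region fillings hits an exceptional slope, i.e. when one of $p,q,r$ equals a small value like $-1,0,1,-2,-3$, which is precisely how cases (1.1.1)--(1.1.6) are indexed. For each exceptional value I would identify the created piece: a solid torus (reducing the manifold, as in $S^2\times S^1$ of (1.1.1)), an annular Seifert piece with one exceptional fiber of order $|k|$ (cases (1.1.3)--(1.1.5)), the periodic torus bundle of (1.1.2), or the specific $\Sigma_{0,3}\times S^1$-plus-trefoil-complement split of (1.1.6). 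I expect the main obstacle to be the bookkeeping of slopes: correctly translating the pretzel parameters into filling coefficients on $M$, keeping track of orientation and framing conventions so that ``$0$-surgery'' really corresponds to the intended slope, and then matching each exceptional filling in the MPR table to the correct Seifert invariants and the correct count $|\mathcal{T}|$ of decomposing tori. Verifying that the enumerated cases are mutually exclusive and exhaustive — in particular that no two small parameter values accidentally collide into an unlisted geometry — is where I would be most careful.
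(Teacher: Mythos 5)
Your proposal correctly identifies the two main external inputs (the capped-off minimal-genus Seifert surface gives an essential torus $T'$ by Scharlemann, and the Martelli--Petronio--Roukema classification of exceptional fillings of the minimally twisted five-chain link drives the case analysis), and your use of the permutation and mirror symmetries of the parameters matches what the paper does. However, the central step is set up in a way that cannot work as stated. You propose to exhibit the closed manifold $P_{2p+1,2q+1,2r+1}(0)$ itself as a Dehn filling of the five-chain link complement, ``with the remaining cusps filled to perform the $0$-framing,'' and then to read off hyperbolicity versus degeneration from the MPR table. But for every genus-one pretzel knot other than the unknot the closed $0$-surgered manifold contains the essential torus $T'$, so it is \emph{always} toroidal, hence \emph{always} an exceptional filling; the hyperbolic/non-hyperbolic dichotomy of MPR applied to the closed manifold gives no information, and in particular there is no ``generic hyperbolic case (1.1.7)'' at the level of the closed manifold. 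Moreover, the closed manifold is not even obtained from the relevant partial filling by Dehn filling the remaining cusps: in the paper's construction the two unfilled components of the five-chain link become the two boundary tori $T'_{+}$ and $T'_{-}$, and $P_{2p+1,2q+1,2r+1}(0)$ is recovered by gluing these two tori to \emph{each other}, which is not a filling.

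The missing idea is to cut along $T'$ \emph{first} and apply MPR to the resulting bounded manifold. The paper shows (Proposition~\ref{xpqr}, via a presentation of $\Gamma_{p,q,r}$, a peripheral-structure-preserving isomorphism, and Waldhausen's theorem) that $\overline{P_{2p+1,2q+1,2r+1}(0)\setminus T'}$ is the filling of exactly three of the five cusps of the five-chain link complement with slopes determined by $p$, $q$, $r$, the other two cusps being left open. MPR's theorem, applied to these partial fillings $(-r,\frac{1}{q+1},-p,\phi,\phi)$, then decides exactly when this two-cusped piece is hyperbolic (Proposition~\ref{proposition}); the finitely many non-hyperbolic cases are identified by explicit Kirby calculus as Seifert or graph manifolds. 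Only after that does the JSJ of the closed manifold follow: since $T'$ is essential and $P_{2p+1,2q+1,2r+1}(0)$ is Seifert only for the trefoil, $T'$ must belong to $\mathcal{T}$, and $\mathcal{T}$ is $\{T'\}$ together with the JSJ tori of the cut-open piece. Your write-up also omits this last logical step (why $T'$ itself is a JSJ torus and how $|\mathcal{T}|$ is counted from the pieces), which is where the counts $|\mathcal{T}|=1$ versus $2$ actually come from.
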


Some of the cases in Theorem~\ref{main} has already known. 
For example, the cases $(1.1.3)$ and $(1.1.4)$, where the corresponding pretzel knot is so called ``double twist knot'' are stated in \cite{sekino},  the case $(1.1.5)$ is stated in \cite{cantwell}. 
Our classification heavily depends on the result of \cite{martelli}, which classifies the exceptional fillings of the minimally twisted five-chain link. 

The rest of this paper is constructed as follows. 
In section~\ref{secdefinition}, we recall the definition of classical pretzel knots and introduce 3-manifolds related to them. 
We also give a surgery description of the 3-manifold obtained by cutting the resultant of $0$-surgery along a classical pretzel knot of genus one along the cap-offed standard Seifert surface of it. 
In section~\ref{secstructure}, we research the structure of the cut 3-manifold stated before. 
We list the cases where the manifolds are non-hyperbolic, and show that the manifolds are hyperbolic in the other cases by applying the classification of the exceptional fillings of the minimally twisted five-chain links. 
In section~\ref{secproof}, we prove Theorem~\ref{main} by using the arguments in section~\ref{secstructure}. 

\section{Classical pretzel knots whose genera are less than or equal to one and their $0$-surgeries} \label{secdefinition}
In this section, we introduce some links and 3-manifolds we need. 
Namely, classical pretzel links (or knots), the complement of a classical pretzel knot of genus one, the 3-manifold obtained by $0$-surgery along a classical pretzel knot of genus one and the 3-manifold obtained by cutting the $0$-surgered manifold along the torus which is obtained by capping off a Seifert surface of genus one of a classical pretzel knot of genus one. 
\subsection{Classical pretzel knots whose genera are less than or equal to one}

\begin{defini}\label{classical}(Classical pretzel links)\\
For three integers $l$, $m$ and $n$, the classical pretzel link $P_{l,m,n}$ is a link represented as in Figure~\ref{pretzel}, where ``R.H.T'' stands for ``right half twists''. 
We call the diagram in Figure~\ref{pretzel} the {\it standard diagram} of $P_{l,m,n}$. 
Note that $P_{l,m,n}$ is equivalent to $P_{m,n,l}$ and that $P_{l,m,n}$ is the mirror image of $P_{-l,-m,-n}$. 
\end{defini}

\begin{figure}[htbp]
 \begin{center}
  \includegraphics[width=100mm]{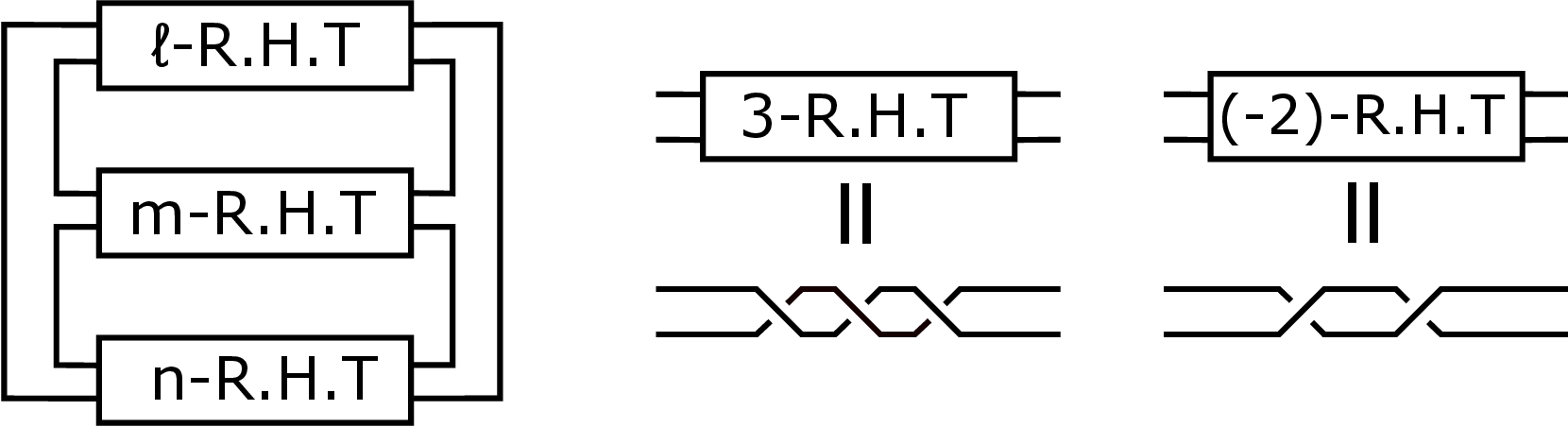}
 \end{center}
 \caption{the standard diagram of $P_{l,m,n}$ and examples of right half twists}
 \label{pretzel}
\end{figure}

For $P_{l,m,n}$ being a knot, there is at most one even number in $\{l,m,n\}$. 
At first, consider $P_{2k,m,n}$ whose genus is less than or equal to one. 
Using the fact that the Alexander polynomial of $P_{2k,m,n}$ is $(t+2+t^{-1})^{-1}\left( t^{m+n+1}+t^{m-n}+t^{-(m-n)}+t^{-(m+n+1)} +k\left( -t^{m+m+2}+t^{m+n}+t^{-(m+n)}-t^{-(m+n+2)} \right)\right)$ and focusing on the degrees, we see that $P_{2k,m,n}$ is a {\it twist knot} $K_{j}$ for some integer $j$, represented as in the left of Figure~\ref{twist} or the mirror image of $K_{j}$. 
Note that $P_{-1,-1,2j+1}$ is equivalent to $K_{j}$. 
Thus  when we consider a classical pretzel knot of genus one, we focus only on $P_{l,m,n}$ such that all of $\{l,m,n\}$ are odd numbers such as in Figure~\ref{pretzel_odd}, where ``R.F.T'' stands for ``right full twists''. 
In fact, by using one of the results in \cite{hirasawa}, which determines the genera of Montesinos knots, we see that a Montesinos knot of genus one is equivalent to $P_{l,m,n}$ for some odd integers $l$, $m$ and $n$. 

\begin{figure}[htbp]
 \begin{center}
  \includegraphics[width=120mm]{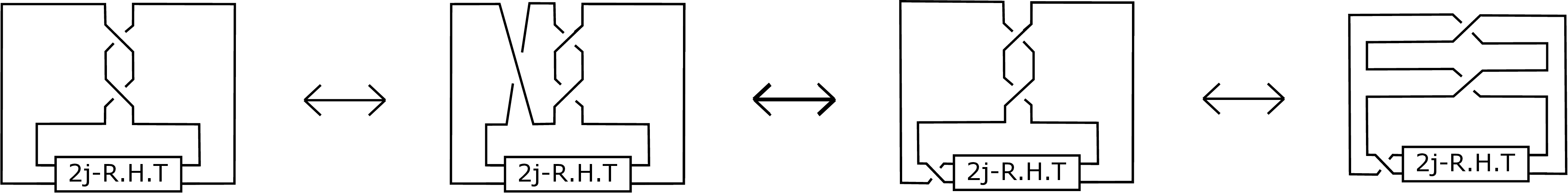}
 \end{center}
 \caption{the twist knot $K_{j}$ and $P_{-1,-1,2j+1}$}
 \label{twist}
\end{figure}

\begin{figure}[htbp]
 \begin{center}
  \includegraphics[width=100mm]{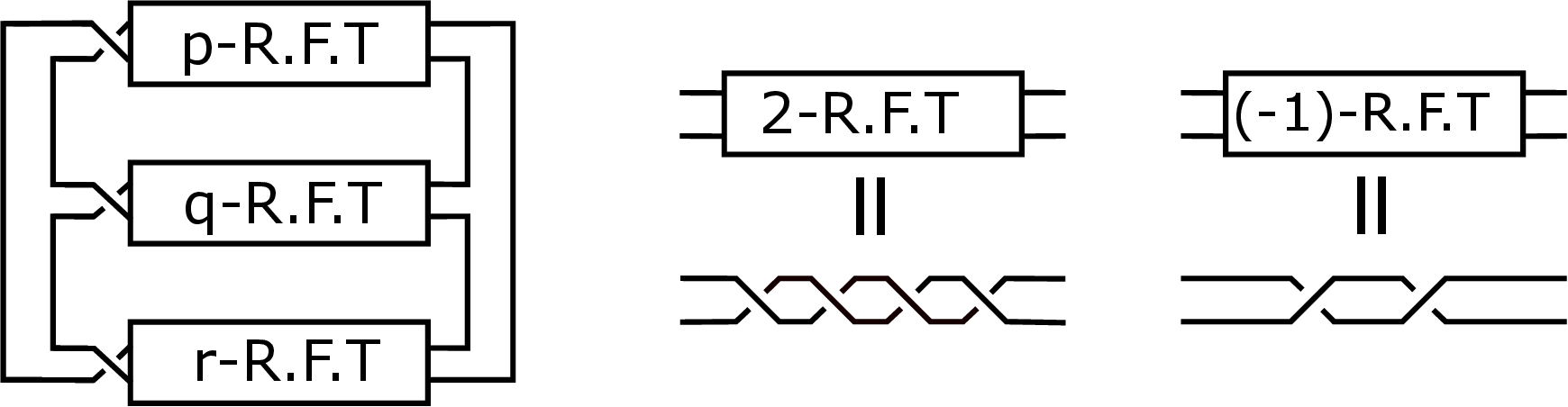}
 \end{center}
 \caption{$P_{2p+1,2q+1,2r+1}$ and examples of right full twists}
 \label{pretzel_odd}
\end{figure}

It can be known whether a given classical pretzel knot $P_{2p+1,2q+1,2r+1}$ is the unknot. 
We state this as a lemma though it is well-known.
\begin{lem} \label{unknot}
A knot $P_{2p+1,2q+1,2r+1}$ is the unknot if and only if $\{-1,0\}\subset \{p,q,r\}$.
\end{lem}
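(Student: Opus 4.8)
The plan is to detect the unknot through the Seifert genus. Since all three twist parameters of $P_{2p+1,2q+1,2r+1}$ are odd, the standard diagram has two Seifert disks joined by three bands and yields an evident once--punctured torus $F$; hence the knot has genus at most one, and it is the unknot exactly when $F$ compresses. First I would record a Seifert matrix of $F$ in the basis $(x_1,x_2)$ of the loops running through the first--and--second and the second--and--third bands, namely
\[
V=\begin{pmatrix} p+q+1 & q+1\\ q & q+r+1\end{pmatrix},
\]
writing $(l,m,n)=(2p+1,2q+1,2r+1)$, and note the consistency checks $\Delta_K(1)=1$ and $\det K=|\det(V+V^{T})|=|lm+mn+nl|$.

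For the ``if'' direction I would use the symmetry $P_{l,m,n}=P_{m,n,l}$ to reduce the hypothesis $\{-1,0\}\subset\{p,q,r\}$ to the single family $P_{-1,1,2r+1}$, for which $V=\left(\begin{smallmatrix}0&1\\0&r+1\end{smallmatrix}\right)$. The class $x_1$ then has vanishing surface self--linking $V(x_1,x_1)=0$, and geometrically $x_1$ is the small loop threading the $+1$ and $-1$ bands, which is unknotted and bounds a compressing disk meeting $F$ only along $x_1$; compressing $F$ produces a disk bounded by $K$, so $K$ is trivial. Equivalently one writes down the explicit flype cancelling the adjacent $+1$ and $-1$ twist regions and reduces the diagram to a crossingless one.

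For the ``only if'' direction I would argue contrapositively, aiming to show $F$ is incompressible whenever two of $\{l,m,n\}$ are not $1$ and $-1$. A compressing curve represents a primitive class $a x_1+b x_2$ whose surface self--linking $V(ax_1+bx_2,\,ax_1+bx_2)=\tfrac12\big((l+m)a^{2}+2m\,ab+(m+n)b^{2}\big)$ must vanish. The discriminant of this binary form is $-4(lm+mn+nl)=-4\det K$, so the existence of a zero forces $\det K=1$ and in fact $lm+mn+nl=-1$: the value $+1$ makes the form definite (note $l+m\neq0$ in that case, since $lm+mn+nl=1$ with $l+m=0$ would give $-m^{2}=1$), hence $F$ incompressible and $K$ nontrivial at once. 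This reduces the whole question to the odd triples satisfying $lm+mn+nl=-1$.

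The hard part is that this Diophantine equation has sporadic solutions beyond the intended family, for instance $(l,m,n)=(-3,5,7)$, and for every such triple one computes that the Alexander polynomial is trivial and the signature vanishes, so no invariant extracted from $V$ can certify that the knot is knotted. To finish I would invoke the genus determination for Montesinos knots of \cite{hirasawa}: applied to the pretzel presentation it evaluates the genus of $P_{2p+1,2q+1,2r+1}$ directly and shows it equals one on every odd triple except those in which two of the parameters are $1$ and $-1$, i.e.\ except when $\{-1,0\}\subset\{p,q,r\}$. Thus $F$ is of minimal genus in all remaining cases, the knot is nontrivial there, and the characterization follows. The principal obstacle is precisely this last step: disposing of the trivial--Alexander--polynomial sporadic triples, where a genuine genus computation, rather than any classical abelian invariant, is indispensable.
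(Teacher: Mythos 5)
Your argument is correct in substance but follows a genuinely different route from the paper. The paper's proof is a pure computation with classical polynomial invariants: the Jones polynomial of $P_{2p+1,2q+1,2r+1}$ forces $-1$ or $0$ to lie in $\{p,q,r\}$ (equivalently $\pm1\in\{l,m,n\}$), and the Alexander polynomial condition $(p+1)(q+1)(r+1)=pqr$ --- which is exactly your $lm+mn+nl=-1$ --- then forces the other value to appear as well. You instead reduce to $lm+mn+nl=-1$ via the Seifert form and correctly identify the real difficulty, namely the sporadic solutions such as $(l,m,n)=(-3,5,7)$ on which every invariant extracted from $V$ is trivial; you dispose of these by citing the genus computation of Hirasawa--Murasugi, whereas the paper disposes of them with the Jones polynomial. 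Each choice has a cost: the paper's argument is self-contained modulo a routine skein computation, while yours is more conceptual but makes \cite{hirasawa} the load-bearing step, so you must check that their theorem really does evaluate the genus of $P_{l,m,n}$ on every odd triple (the paper invokes \cite{hirasawa} only for the converse direction, that a genus-one Montesinos knot is an odd pretzel knot). A classical alternative for killing the sporadic triples, closer in spirit to your approach, is the double branched cover: for odd $|l|,|m|,|n|\ge 3$ it is a Seifert fibred space with three exceptional fibres, hence not $S^3$.

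Two small points in your reduction need repair, though neither is fatal. First, a compressing disc for the once-punctured torus $F$ may have boundary-parallel, hence null-homologous, boundary, in which case the quadratic form gives no information; but then $K$ visibly bounds a disc and $\Delta_K\doteq 1$ still yields $lm+mn+nl=-1$, so nothing is lost. Second, the existence of a primitive isotropic vector for the form $(l+m)a^{2}+2mab+(m+n)b^{2}$ only forces $lm+mn+nl$ to be the negative of a perfect square, not $\pm 1$; it is the triviality of the Alexander polynomial (equivalently $\det V=0$), or the unknot's determinant being $1$, that actually pins down $lm+mn+nl=-1$, not the discriminant argument as you phrased it.
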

\begin{proof}
If part is clear. We will prove only if part. 
Suppose that $P_{2p+1,2q+1,2r+1}$ is the unknot. 
Since the Jones polynomial of $P_{2p+1,2q+1,2r+1}$ is $(t+2+t^{-1})^{-1}\biggl( (t+1+t^{-1})(-t^{-2(p+q+r+2)}-t^{-2(p+q+r+1)}+t^{-2(p+q+1)}+t^{-2(q+r+1)}+t^{-2(r+p+1)})+1\biggr)$, the identity $\biggl( (t+1+t^{-1})(-t^{-2(p+q+r+2)}-t^{-2(p+q+r+1)}+t^{-2(p+q+1)}+t^{-2(q+r+1)}+t^{-2(r+p+1)})+1\biggr)=t+2+t^{-1}$ must holds. 
Focus on the terms of even degree in the left hand side of this identity. 
It is clear that $-2(p+q+r+2)$ and $-2(p+q+r+1)$ are distinct integers. 
If none of $\{p,q,r\}$ is $-1$ nor $0$, then all of $\{ -2(p+q+1), -2(q+r+1), -2(r+p+1)\}$ are neither $-2(p+q+r+2)$ nor $-2(p+q+r+1)$. 
Thus there remain at least two terms of even degree in the left hand side of the identity after simplifying. This leads a contradiction and we see that $-1$ or $0$ must be in $\{p,q,r\}$. 
Moreover, since the Alexander polynomial of $P_{2p+1,2q+1,2r+1}$ is $\Bigl( (p+1)(q+1)(r+1)-pqr\Bigr)t-\biggl( 2\Bigl( (p+1)(q+1)(r+1)-pqr\Bigr)-1\biggr) + \Bigl( (p+1)(q+1)(r+1)-pqr\Bigr)t^{-1}$, the identity $(p+1)(q+1)(r+1)=pqr$ must hold. 
By this, we see that if one of $-1$ and $0$ is in $\{p,q,r\}$, then the other is also in. 
This finishes the proof.
\end{proof}

\subsection{The complement of a classical pretzel knot of genus one and the 3-manifold obtained by $0$-surgery along it}

\begin{defini}
A knot $P_{2p+1,2q+1,2r+1}$ for integers $p$, $q$ and $r$ bounds a torus $T$ with one boundary component in the standard diagram of $P_{2p+1,2q+1,2r+1}$ as in Figure~\ref{pretzel_odd_seifert}. 
We call this $T$ the {\it standard Seifert surface} of $P_{2p+1,2q+1,2r+1}$. 
Note that this $T$ is a Seifert surface of minimal genus unless $P_{2p+1,2q+1,2r+1}$ is the unknot and that this $T$ is compatible with taking mirror image and cyclic permutation of three parameters of a classical pretzel link. 
We call the light gray side (or the right side) of $T$ in Figure~\ref{pretzel_odd_seifert} the {\it front side} and the other side the {\it back side}.
\end{defini}

\begin{figure}[htbp]
 \begin{center}
  \includegraphics[width=100mm]{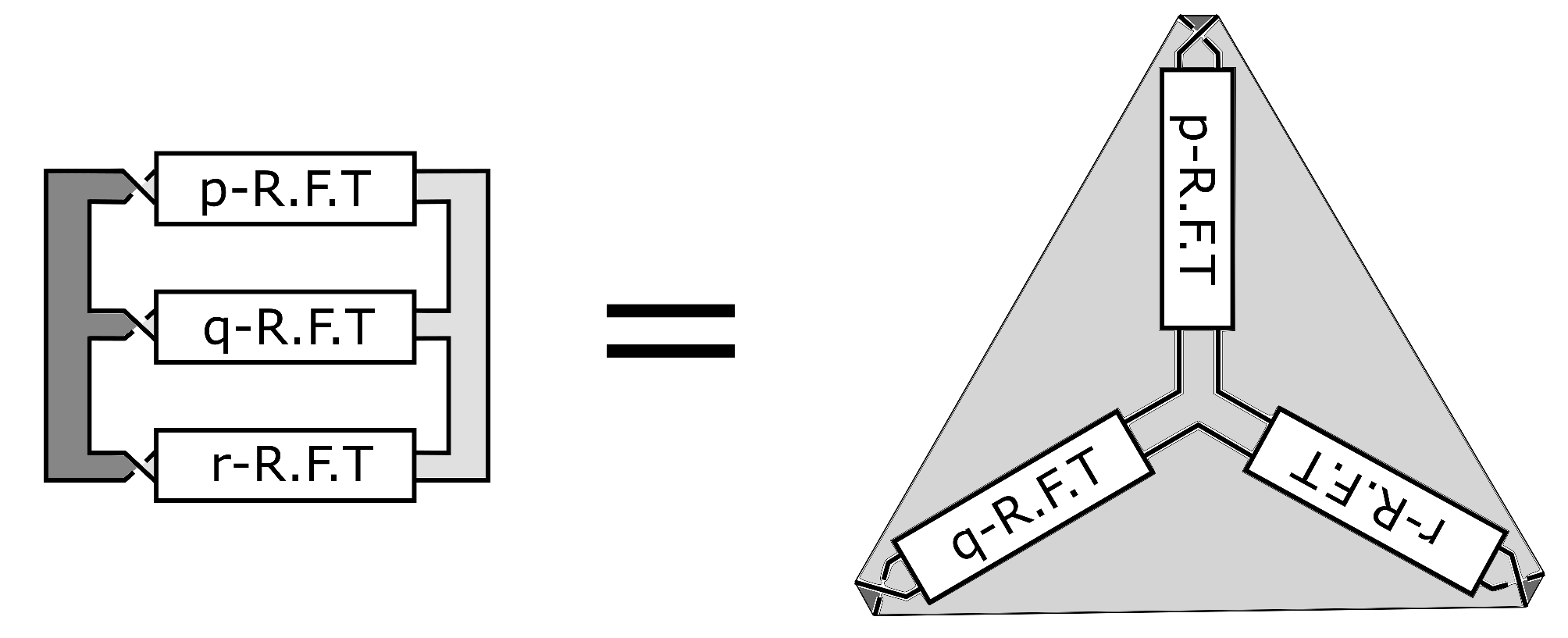}
 \end{center}
 \caption{The standard Seifert surface $T$ of genus one for $P_{2p+1,2q+1,2r+1}$}
 \label{pretzel_odd_seifert}
\end{figure}

Let $E_{2p+1,2q+1,2r+1}$ denote the closure of the complement of a small tubular neighborhood of $P_{2p+1,2q+1,2r+1}$ in $S^{3}$. 
We give $E_{2p+1,2q+1,2r+1}$ the orientation which comes from that of $S^{3}$. 
There is a useful presentation of the fundamental group of $E_{2p+1,2q+1,2r+1}$, so called the Lin presentation: 
\begin{align}
 \pi_{1}(E_{2p+1,2q+1,2r+1}) = \left<a, b, t \mid ta^{r+1}(ba)^{q}bt^{-1}=a^{r+1}(ba)^{q}, \ tb^{p+1}(ab)^{q}t^{-1}=b^{p+1}(ab)^{q}a \right> \label{eqlinpresentation}
\end{align}
In this presentation, $a$, $b$ and $t$ are based loops in $E_{2p+1,2q+1,2r+1}$ as in the left of Figure~\ref{fundamental}. 
This presentation is obtained as follows. 
The complement of the standard Seifert surface $T$ is a handlebody whose fundamental group is generated by loops $a$ and $b$. 
Take loops $x$ and $y$ as in the right of Figure~\ref{fundamental}. 
The push-ups of  loops $x$ and $y$ on $T$ is $a^{r+1}(ba)^{q}b$ and $b^{p+1}(ab)^{q}$, respectively and the push-downs of  loops $x$ and $y$ on $T$ is $a^{r+1}(ba)^{q}$ and $b^{p+1}(ab)^{q}a$, respectively. 
When pasting the cut ends of $T$ in the handlebody in order to reconstruct $E_{2p+1,2q+1,2r+1}$, the push-ups of $x$ and $y$ are identified with the push-downs of $x$ and $y$, respectively. 
In this presentation, the meridian of $P_{2p+1,2q+1,2r+1}$ corresponds to $t$ and the canonical longitude of it corresponds to $b^{p}(ba)^{q+1}a^{r}b^{-p}(b^{-1}a^{-1})^{q+1}a^{-r}$. 

\begin{figure}[htbp]
 \begin{center}
  \includegraphics[width=100mm]{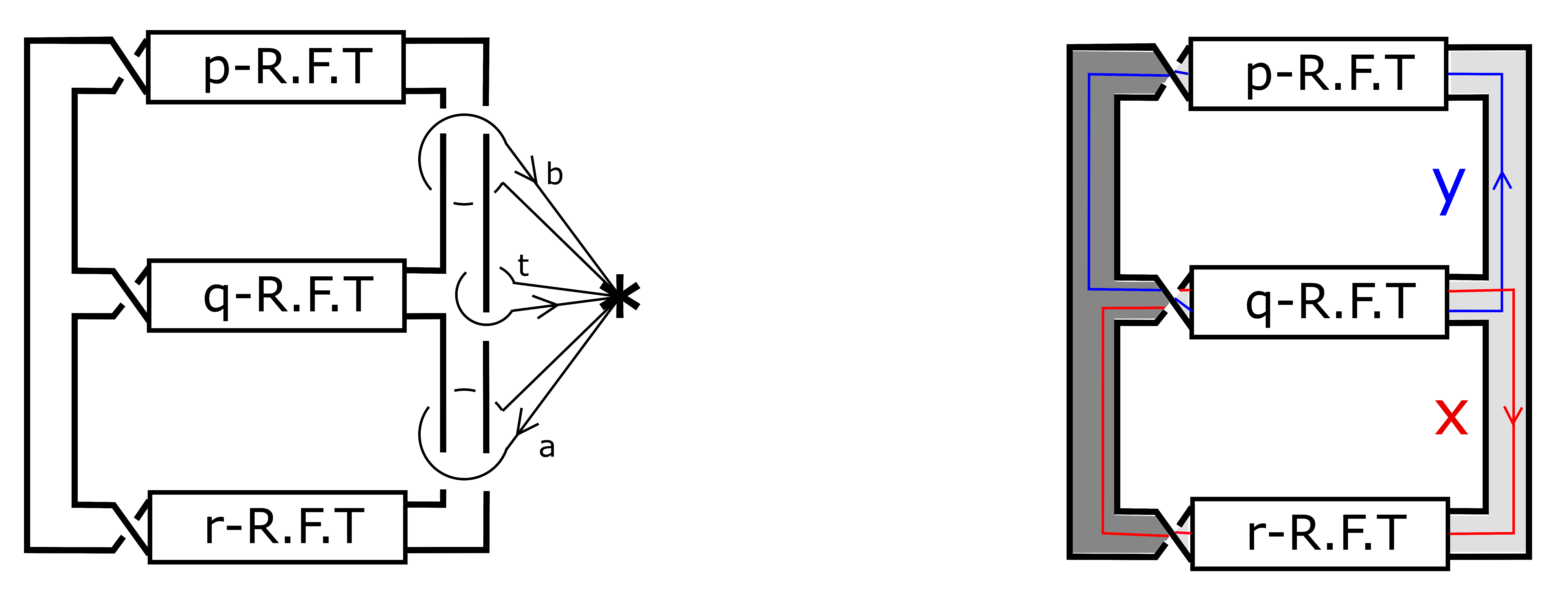}
 \end{center}
 \caption{Left: the based loops $a$, $b$ and $t$ in $E_{2p+1,2q+1,2r+1}$ \ \ \ \ Right: loops $x$ and $y$ on $T$}
 \label{fundamental}
\end{figure}

Let $P_{2p+1,2q+1,2r+1}(0)$ denote the 3-manifold obtained by $0$-surgery along $P_{2p+1,2q+1,2r+1}$. 
We give $P_{2p+1,2q+1,2r+1}(0)$ the orientation which comes from that of $E_{2p+1,2q+1,2r+1}$. 
In $P_{2p+1,2q+1,2r+1}(0)$, the boundary of the standard Seifert surface $T$ is capped-off by a disk. 
Let $T'$ denote this resultant closed torus. 
According to Theorem 10.1 of \cite{scharlemann}, this $P_{2p+1,2q+1,2r+1}(0)$ is irreducible and $T'$ is incompressible unless $P_{2p+1,2q+1,2r+1}$ is the unknot since $T$ is a Seifert surface of minimal genus. 
From the presentation \eqref{eqlinpresentation}, the fundamental group of $P_{2p+1,2q+1,2r+1}(0)$, denoted by $F_{p,q,r}$, can be computed as: 
\begin{align}\label{stdrep}
 F_{p,q,r} &= \pi_{1}\left(P_{2p+1,2q+1,2r+1}(0)\right) \notag \\
             &= \left<a, b, t \mid ta^{r+1}(ba)^{q}bt^{-1}=a^{r+1}(ba)^{q}, \ tb^{p+1}(ab)^{q}t^{-1}=b^{p+1}(ab)^{q}a, \ b^{p}(ba)^{q+1}a^{r}=a^{r}(ab)^{q+1}b^{p} \right>.
\end{align}

\subsection{The 3-manifold obtained by cutting $P_{2p+1,2q+1,2r+1}(0)$ along $T'$}
Moreover, we consider $\overline{P_{2p+1,2q+1,2r+1}(0)\setminus T'}$, the closure  of the complement of $T'$ in $P_{2p+1,2q+1,2r+1}(0)$. 
We give $\overline{P_{2p+1,2q+1,2r+1}(0)\setminus T'}$ the orientation which comes from that of $P_{2p+1,2q+1,2r+1}(0)$. 
This is obtained from the closure of the complement of the standard Seifert surface $T$ of $P_{2p+1,2q+1,2r+1}$ in $S^{3}$, which is a handlebody of genus two by attaching a $2$-handle along the longitude of $P_{2p+1,2q+1,2r+1}$. 
Following this construction, the fundamental group of $\overline{P_{2p+1,2q+1,2r+1}(0)\setminus T'}$, denoted by $\Gamma_{p,q,r}$, can be computed as: 
\begin{align}
 \Gamma_{p,q,r}= \pi_{1}\left( \overline{P_{2p+1,2q+1,2r+1}(0)\setminus T'}\right) = \left<a, b\mid b^{p}(ba)^{q+1}a^{r}=a^{r}(ab)^{q+1}b^{p} \right>. \label{gammapqr}
\end{align}
The boundary of $\overline{P_{2p+1,2q+1,2r+1}(0)\setminus T'}$ consists of two tori. 
Let $T'_{+}$ denote the one coming from the front side of $T'$ and $T'_{-}$ denote the other. 
We fix presentations of $\pi_{1}(T'_{\pm})$, denoted by $\Gamma_{\pm}$ as follows, where $X_{\pm}$ and $Y_{\pm}$ come from $x$ and $y$ in Figure~\ref{fundamental}. 
\begin{align}
\Gamma_{+}&=\pi_{1}(T_{+})=\left< X_{+}, Y_{+} \mid [X_{+}, Y_{+}]=1 \right>  \\ \notag \\
\Gamma_{-}&=\pi_{1}(T_{-})=\left< X_{-}, Y_{-} \mid [X_{-}, Y_{-}]=1 \right> 
\end{align}
Then we take homomorphisms $\iota _{+}: \Gamma_{+}\longrightarrow \Gamma_{p,q,r}$ and $\iota _{-}: \Gamma_{-}\longrightarrow \Gamma_{p,q,r}$ which are induced by the inclusions such that $\iota_{+}(X_{+})=a^{r+1}(ba)^{q}b$, $\iota_{+}(Y_{+})=b^{p+1}(ab)^{q}$, $\iota_{-}(X_{-})=a^{r+1}(ba)^{q}$ and $\iota_{-}(Y_{-})=b^{p+1}(ab)^{q}a$ hold. 
Note that  $\overline{P_{2p+1,2q+1,2r+1}(0)\setminus T'}$ is irreducible and has incompressible boundary unless $P_{2p+1,2q+1,2r+1}$ is the unknot since $P_{2p+1,2q+1,2r+1}(0)$ is irreducible and $T'$ is incompressible unless $P_{2p+1,2q+1,2r+1}$ is the unknot.

Next we will give a surgery description for $\overline{P_{2p+1,2q+1,2r+1}(0)\setminus T'}$. 
\begin{prop}\label{xpqr}
Let $X_{p,q,r}$ be a 3-manifold with a surgery description as in Figure~\ref{5chain_mirror}. 
We give $X_{p,q,r}$ the orientation which comes from that of $S^3$. 
Assume that $P_{2p+1,2q+1,2r+1}$ is not the unknot. 
Then $X_{p,q,r}$ is orientation preservely homeomorphic to $\overline{P_{2p+1,2q+1,2r+1}(0)\setminus T'}$. 
\end{prop}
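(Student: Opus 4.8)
The plan is to exhibit $\overline{P_{2p+1,2q+1,2r+1}(0)\setminus T'}$ explicitly as surgery on a link in $S^3$ and then to transform that link, by Kirby moves, into the diagram of Figure~\ref{5chain_mirror}. The starting point is the description already extracted above: $\overline{P_{2p+1,2q+1,2r+1}(0)\setminus T'}$ is the genus-two handlebody $H$ obtained as the exterior of the standard Seifert surface $T$ in $S^3$, with one $2$-handle attached along the longitude $\lambda$ of $P_{2p+1,2q+1,2r+1}$; the single defining relation of $\Gamma_{p,q,r}$ in \eqref{gammapqr} is precisely the trace of this $2$-handle. The two boundary tori $T'_+$ and $T'_-$ come from the front and back push-offs of $T$, so an orientation-preserving homeomorphism must in particular carry the peripheral data $\iota_{+}(X_{+}),\iota_{+}(Y_{+})$ and $\iota_{-}(X_{-}),\iota_{-}(Y_{-})$ to the correct meridian--longitude pairs of the two cusps of $X_{p,q,r}$.

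First I would draw $H$ as the complement in $S^3$ of a two-component link $C$ whose exterior is a genus-two handlebody, reading $C$ off from the disk-with-two-bands model of $T$ in which the two bands carry the half-twists recorded by $p$, $q$ and $r$; the two components of $C$ are exactly the cusps that become $T'_+$ and $T'_-$. Attaching the $2$-handle along $\lambda$ then adds one framed curve to the picture, so that $\overline{P_{2p+1,2q+1,2r+1}(0)\setminus T'}$ is presented as surgery on this single framed curve in the complement of $C$. The twisting of the bands is encoded either directly as surgery coefficients or, after an inverse Rolfsen twist, as additional $\pm 1$-framed unknots linking the relevant strands, which is what will eventually supply the extra chain components.

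Next I would normalize this diagram by blow-ups, blow-downs, handle slides and Rolfsen twists, spreading the band twists out along the diagram until the framed link becomes the minimally twisted five-chain configuration, with three of its five components carrying the surgery coefficients dictated by $p$, $q$ and $r$ and the remaining two left unfilled to produce $T'_+$ and $T'_-$; comparing with Figure~\ref{5chain_mirror} (and with the fact that the figure is the mirror, which accounts for the orientation convention) should then yield the manifold $X_{p,q,r}$. Throughout the sequence of moves I would fix an orientation of the ambient $S^3$ and verify that each move respects it, so that the resulting homeomorphism is orientation-preserving, matching the convention that both manifolds carry the orientation induced from $S^3$.

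As an independent check I would compute $\pi_{1}(X_{p,q,r})$ from the final surgery diagram and verify that it agrees with the presentation \eqref{gammapqr}, matching the peripheral subgroups of the two unfilled components with $\iota_{+}$ and $\iota_{-}$; since both manifolds are irreducible with incompressible boundary once $P_{2p+1,2q+1,2r+1}$ is knotted, such a peripheral-preserving isomorphism could even be promoted to a homeomorphism via Waldhausen's theorem, should the constructive route prove too delicate. I expect the main obstacle to be exactly the Kirby-calculus bookkeeping: converting the half-twisted bands of the pretzel Seifert surface into clean surgery coefficients on the chain, keeping careful track of the two boundary tori and their framings through every handle slide and blow-down, and confirming at the end that the identification with Figure~\ref{5chain_mirror} preserves rather than reverses orientation.
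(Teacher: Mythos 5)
Your ``fallback'' is in fact the paper's actual proof: the author does not perform any Kirby calculus at all, but instead computes $\pi_{1}(X_{p,q,r})$ from a Wirtinger presentation of the five-chain link, reduces it to $\left<A,C\mid A^{p}(AC)^{q+1}C^{r}=C^{r}(CA)^{q+1}A^{p}\right>$, matches this with $\Gamma_{p,q,r}$ from \eqref{gammapqr}, verifies that the isomorphism preserves the peripheral structure, and invokes Waldhausen. So the algebraic route you relegate to an ``independent check'' is the whole argument, and your primary route (presenting the cut-open manifold as surgery in the complement of a two-component link and normalizing to the five-chain by blow-ups, handle slides and Rolfsen twists) is a genuinely different, more constructive strategy. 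It is plausible, and it would have the advantage of making the identification of Figure~\ref{5chain_mirror} visibly geometric; but as written it is only a plan. The entire content of that approach is the explicit sequence of moves, and you defer exactly that (``I expect the main obstacle to be exactly the Kirby-calculus bookkeeping''), so nothing is actually proved along that route.

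In the Waldhausen route there are two substantive verifications you gloss over. First, irreducibility of $X_{p,q,r}$ is not automatic: the paper obtains it indirectly, arguing that a reduction would force $G_{p,q,r}\cong\Gamma_{p,q,r}$ to split as a nontrivial free product (using the Poincar\'e conjecture to rule out sphere summands with nontrivial group on the $X_{p,q,r}$ side), which by the Kneser conjecture would make $\overline{P_{2p+1,2q+1,2r+1}(0)\setminus T'}$ non-prime, contradicting Scharlemann's theorem. Your sentence ``both manifolds are irreducible with incompressible boundary once $P$ is knotted'' asserts this for $X_{p,q,r}$ without justification. Second, ``matching the peripheral subgroups with $\iota_{+}$ and $\iota_{-}$'' is a concrete computation: one must express the meridian--longitude pairs of the two unfilled chain components in $G_{p,q,r}$ (the paper finds $D^{-1}=(AC)^{q}AC^{r+1}$, $(EC)^{-1}=(AC)^{q}A^{p+1}$, $(DA)^{-1}=C(AC)^{q}C^{r}$, $E^{-1}=C(AC)^{q}A^{p+1}$) and check that, after composing with explicit inner automorphisms $f$ and $g$, they go to $\iota_{\pm}(X_{\pm})$ and $\iota_{\pm}(Y_{\pm})$. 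Without exhibiting these elements and conjugators, the hypothesis of Waldhausen's theorem (an isomorphism respecting the peripheral structure, on Haken manifolds) has not been verified, and the orientation claim at the end also rests on comparing these explicit bases. So the proposal identifies the correct ingredients but leaves both of its routes short of a proof at precisely the points where the real work lies.
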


\begin{figure}[htbp]
 \begin{center}
  \includegraphics[width=40mm]{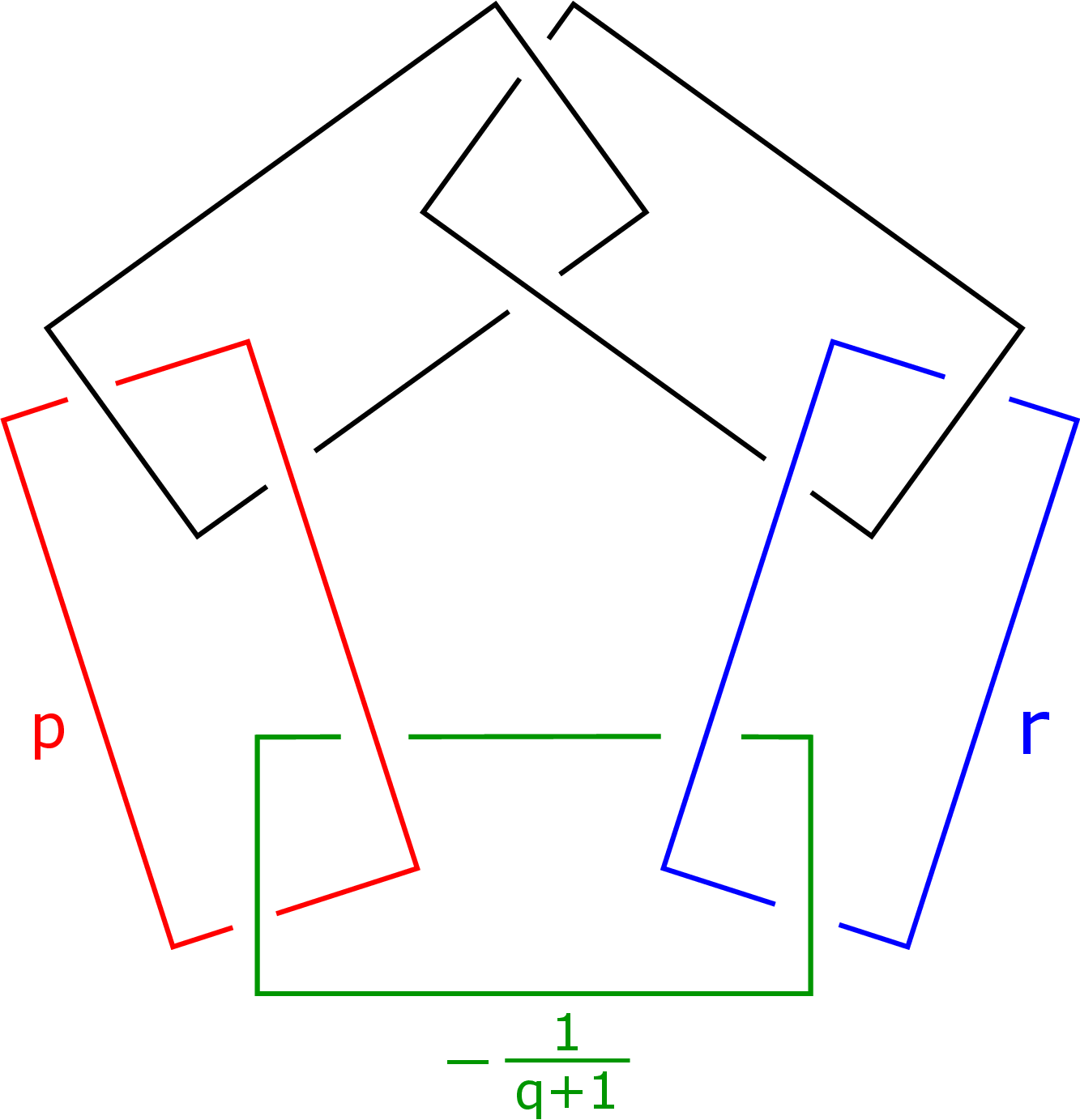}
 \end{center}
 \caption{A surgery description of $X_{p,q,r}$, where three of five boundary components of the closure of the complement of a small neighborhood of this five component link are filled. }
 \label{5chain_mirror}
\end{figure}

\begin{proof}
Take a five component link $L=C_{1}\cup C_{2}\cup C_{3} \cup C_{4} \cup C_{5}$ as in the top of Figure~\ref{5chain_mirror_wirtinger}. 
Let $E(L)$ denote the complement of a small neighborhood of $L$. 
In order to compute $\pi_{1}\left(E(L)\right)$, we assign an orientation to $L$ and symbols to arcs of the diagram of $L$ as in the bottom of Figure~\ref{5chain_mirror_wirtinger}. 
Note that the meridian and the canonical longitude of $C_1$ are represented as $A$ and $EB$, respectively, 
those of $C_2$ are represented as $B$ and $CA$, respectively, 
those of $C_3$ are represented as $C$ and $CDC^{-1}\cdot B$, respectively, 
those of $C_4$ are represented as $D$ and $EC$, respectively, 
and those of $C_5$ are represented as $E$ and $DA$, respectively. 
Then we compute  $\pi_{1}\left(E(L)\right)$ as follows. 
\begin{align}
\pi_{1}\left(E(L)\right) &=  \left<A, B, C, D, E \mid E^{-1}AE=BAB^{-1},\ \ ABA^{-1}=C^{-1}BC,\ \ BCB^{-1}=CD^{-1}CDC^{-1}\ \ \ \right> \notag \\
&\ \ \ \ \ \ \ \ \ \ \ \ \hspace{3.0cm},CDC^{-1}=E^{-1}DE,\ AEA^{-1}=D^{-1}ED  \notag \\
 &=\left<A, B, C, D, E \mid A(EB)=(EB)A,\ \ (CA)B=B(CA),\ \ DC^{-1}BC=CDC^{-1}B\ \ \ \right> \notag \\
&\ \ \ \ \ \ \ \ \ \ \ \ \hspace{3.0cm},CDC^{-1}=E^{-1}DE,\ AEA^{-1}=D^{-1}ED  \notag \\
 &=\left<A, B, C, D, E \mid A(EB)=(EB)A,\ \ (CA)B=B(CA),\ \ (CDC^{-1}B)C=C(CDC^{-1}B)\ \ \ \right> \notag \\
&\ \ \ \ \ \ \ \ \ \ \ \ \hspace{3.0cm},CDC^{-1}=E^{-1}DE,\ AEA^{-1}=D^{-1}ED 
\end{align}
$X_{p,q,r}$ is obtained by $p$-filling of $C_1$, $\left(-\frac{1}{q+1}\right)$-filling of $C_2$ and $r$-filling of $C_3$. 
Thus $\pi_{1}\left(X_{p,q,r}\right)$, denoted by $G_{p,q,r}$ is computed as follows.
\begin{align}
 G_{p,q,r}&=\pi_{1}\left( X_{p,q,r}\right) \notag \\
&=  \left<A, B, C, D, E \mid A(EB)=(EB)A,\ \ (CA)B=B(CA),\ \ (CDC^{-1}B)C=C(CDC^{-1}B)\ \ \ \right> \notag \\
&\hspace{3.0cm},CDC^{-1}=E^{-1}DE,\ AEA^{-1}=D^{-1}ED\notag \\
&\hspace{3.0cm},A^{p}=(EB)^{-1},\ B=(CA)^{q+1},\ C^{r}=(CDC^{-1}B)^{-1}  \notag \\
&=  \left<A, B, C, D, E \mid A^{p}=(EB)^{-1},\ B=(CA)^{q+1},\ C^{r}=(CDC^{-1}B)^{-1}\ \ \ \right> \notag \\
&\hspace{3.0cm},CDC^{-1}=E^{-1}DE,\ AEA^{-1}=D^{-1}ED\notag \\
&=  \left<A, B, C, D, E \mid E=A^{-p}B^{-1},\ B=(CA)^{q+1},\ D=C^{-(r+1)}B^{-1}C\ \ \ \right> \notag \\
&\hspace{3.0cm},CDC^{-1}=E^{-1}DE,\ AEA^{-1}=D^{-1}ED\notag \\
&=  \left<A, B, C, D, E \mid E=A^{-p}(CA)^{-(q+1)},\ B=(CA)^{q+1},\ D=C^{-r}(AC)^{-(q+1)}\ \ \ \right> \notag \\
&\hspace{3.0cm},CDC^{-1}=E^{-1}DE,\ AEA^{-1}=D^{-1}ED\notag \\
&=  \left<A, B, C, D, E \mid E=A^{-p}(CA)^{-(q+1)},\ B=(CA)^{q+1},\ D=C^{-r}(AC)^{-(q+1)}\ \ \ \right> \notag \\
&\hspace{3.0cm},C\cdot C^{-r}(AC)^{-(q+1)}\cdot C^{-1}=(CA)^{q+1}A^{p}\cdot C^{-r}(AC)^{-(q+1)}\cdot A^{-p}(CA)^{-(q+1)} \notag \\
&\hspace{3.0cm},\ A\cdot A^{-p}(CA)^{-(q+1)} \cdot A^{-1}=(AC)^{q+1}C^{r}\cdot A^{-p}(CA)^{-(q+1)}\cdot C^{-r}(AC)^{-(q+1)}\notag \\
&=  \left<A, B, C, D, E \mid E=A^{-p}(CA)^{-(q+1)},\ B=(CA)^{q+1},\ D=C^{-r}(AC)^{-(q+1)}\ \ \ \right> \notag \\
&\hspace{3.0cm}, C^{-r}=(CA)^{q+1}A^{p}\cdot C^{-r}(AC)^{-(q+1)}\cdot A^{-p} \notag \\
&\hspace{3.0cm},\  A^{-p}=(AC)^{q+1}C^{r}\cdot A^{-p}(CA)^{-(q+1)}\cdot C^{-r}\notag \\
&=  \left<A, C \mid A^{p}(AC)^{q+1}C^{r}=C^{r}(CA)^{q+1}A^{p}\right> \label{gpqr}.
\end{align}
Comparing (\ref{gammapqr}) and (\ref{gpqr}), we see that $\Gamma_{p,q,r}=\pi_{1}\left( \overline{P_{2p+1,2q+1,2r+1}(0)\setminus T'}\right)$ and $G_{p,q,r}=\pi_{1}(X_{p,q,r})$ are isomorphic. 
Recall that $\overline{P_{2p+1,2q+1,2r+1}(0)\setminus T'}$ is irreducible and has incompressible boundary since $P_{2p+1,2q+1,2r+1}$ is not the unknot. 
We will show that $X_{p,q,r}$ is also irreducible. 
Suppose that $X_{p,q,r}$ is reducible for a contradiction. 
By noting that $X_{p,q,r}$ has boundaries which are not spherical and using Poincar\'{e} conjecture, the group $G_{p,q,r}=\pi_{1}(X_{p,q,r})$ admits a structure of non-trivial free product, and so does $\Gamma_{p,q,r}=\pi_{1}\left( \overline{P_{2p+1,2q+1,2r+1}(0)\setminus T'}\right)$. 
Then $\overline{P_{2p+1,2q+1,2r+1}(0)\setminus T'}$ would be non-prime by Kneser conjecture (see Theorem 2.1 of \cite{aschenbrenner} for example). 
This leads a contradiction and we see that $X_{p,q,r}$ is irreducible. 

Let $\partial _{+}$ and $\partial _{-}$ be boundary tori of $X_{p,q,r}$ which come from $C_{4}$ and $C_{5}$, respectively. 
We fix presentations of the fundamental groups $G_{\pm}$ of $\partial_{\pm}$ as follows. 
\begin{align}
G_{+}&=\pi_{1}(\partial_{+})=\left< x_{+}, y_{+} \mid [x_{+}, y_{+}]=1 \right>   \\
\notag \\
G_{-}&=\pi_{1}(\partial_-)=\left< x_{-}, y_{-} \mid [x_{-}, y_{-}]=1 \right>. 
\end{align}  
We choose $x_{\pm}$ and $y_{\pm}$ above so that $j_{+}(x_{+})=(AC)^{q}AC^{r+1}(=D^{-1})$, $j_{+}(y_{+})=(AC)^{q}A^{p+1}\left(=(EC)^{-1} \right)$, $j_{-}(x_{-})=C(AC)^{q}C^{r}\left(=(DA)^{-1} \right)$ and $j_{-}(y_{-})=C(AC)^{q}A^{p+1}(=E^{-1})$ hold  for some homomorphisms $j_{\pm}:G_{\pm}\longrightarrow G$ induced by the inclusions. 

Set $\phi$, $\phi_{+}$, $\phi_{-}$, $f$ and $g$ to be isomorphisms satisfying: 
\begin{itemize}
\item $\phi:G_{p,q,r}\longrightarrow \Gamma_{p,q,r}$ maps $A$ and $C$ to $b$ and $a$ respectively,
\item $\phi_{+}:G_{+}\longrightarrow \Gamma_{+}$ maps $x_{+}$ and $y_{+}$ to $X_{+}$ and $Y_{+}$ respectively, 
\item $\phi_{-}:G_{-}\longrightarrow \Gamma_{-}$ maps $x_{-}$ and $y_{-}$ to $X_{-}$ and $Y_{-}$ respectively, 
\item $f:\Gamma \longrightarrow \Gamma$ maps $z\in \Gamma$ to $(ba)^{q}b\cdot z\cdot b^{-1}(ba)^{-q}$, and 
\item $g:\Gamma \longrightarrow \Gamma$ maps $z\in \Gamma$ to $a(ba)^{q}\cdot z\cdot (ba)^{-q}a^{-1}$. 
\end{itemize}
Then the following two diagrams commute.
\[
 \xymatrix{
   \Gamma_{+} \ar[r]^{\iota_{+}} & \Gamma \ar[r]^{f}  & \Gamma &  & \Gamma_{-} \ar[r]^{\iota_{-}} & \Gamma \ar[r]^{g}  & \Gamma\\
   G_{+}\ar[u]^{\phi_{+}} \ar[r]^{j_{+}} &  G\ar[ur]_{\phi} & & &  G_{-}\ar[u]^{\phi_{-}} \ar[r]^{j_{-}} &  G\ar[ur]_{\phi} 
}
\]
Therefore, $\phi$ (with other isomorphisms) is an isomorphism preserving the peripheral structure. 
Note that $\overline{P_{2p+1,2q+1,2r+1}(0)\setminus T'}$ and $X_{p,q,r}$ are irreducible and that they have essential surfaces since a 3-manifold whose boundary component other than spheres are not empty has an essential surface. 
By a result of Waldhausen \cite{waldhausen}, this isomorphism is induced by a homeomorphism between $\overline{P_{2p+1,2q+1,2r+1}(0)\setminus T'}$ and $X_{p,q,r}$. 

Moreover, the oriented basis of loops on the boundary of small neighborhood of $C_4$ and $C_5$ are mapped to those of $T'_{+}$ and $T'_{-}$, which are compatible to our orientations of $\overline{P_{2p+1,2q+1,2r+1}(0)\setminus T'}$ and $X_{p,q,r}$. 
Thus the homeomorphism stated above preserves the orientation. 

\begin{figure}[htbp]
 \begin{center}
  \includegraphics[width=120mm]{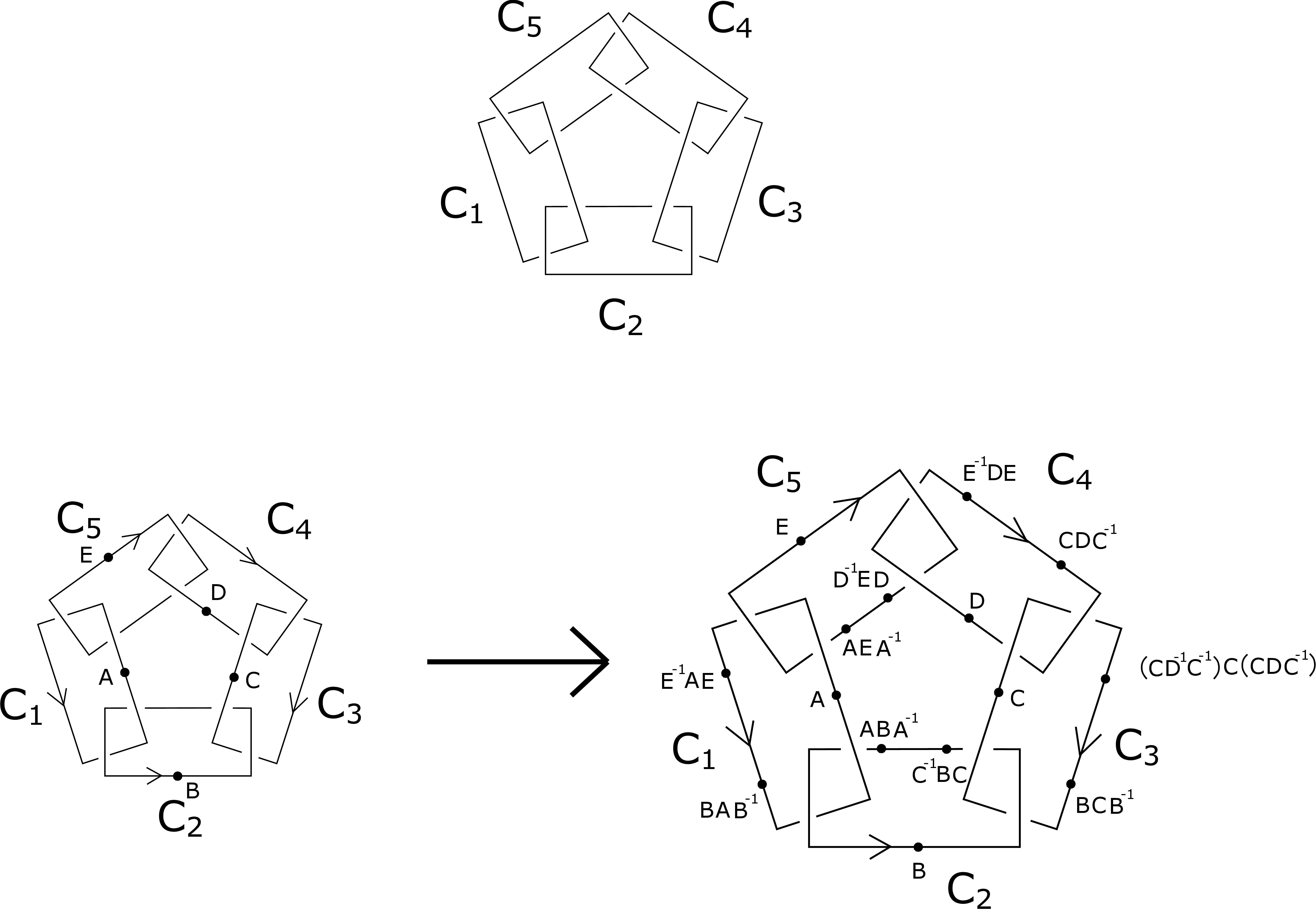}
 \end{center}
 \caption{Top: five component link  \ \ \ \ \ \ Bottom: assignments for a Wirtinger presentation of $\pi_{1}\left(E(L)\right)$}
 \label{5chain_mirror_wirtinger}
\end{figure}

\end{proof}

\section{The structure of $\overline{P_{2p+1,2q+1,2r+1}(0)\setminus T'}$} \label{secstructure}
In this section we prove the following proposition, which determines whether the resultant of cutting $P_{2p+1,2q+1,3r+1}(0)$ along $T'$ is hyperbolic or not: 

\begin{prop} \label{proposition}
Let $P_{2p+1,2q+1,2r+1}$ be a classical pretzel knot other than the unknot. 
Then $\overline{P_{2p+1,2q+1,2r+1}(0)\setminus T'}$ is non-hyperbolic if and only if either of the following holds: 

\begin{itemize}
\item[(3.1.1)] $-1\in \{p,q,r\}$, 
\item[(3.1.2)] $0\in \{p,q,r\}$,
\item[(3.1.3)] $\{-2,1\}\subset \{p,q,r\}$,
\item[(3.1.4)] $\{p,q,r\}=\{-2,2,2\}$ or
\item[(3.1.5)] $\{p,q,r\}=\{-3,-3,1\}$.
\end{itemize}
\end{prop}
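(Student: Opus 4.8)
The plan is to transport the question to the minimally twisted five-chain link and then feed it into the classification of \cite{martelli}. By Proposition~\ref{xpqr} it is enough to decide when $X_{p,q,r}$ is hyperbolic, and $X_{p,q,r}$ is obtained from the complement $M$ of the minimally twisted five-chain link $L=C_{1}\cup\cdots\cup C_{5}$ (which is hyperbolic) by filling the three cusps at $C_{1},C_{2},C_{3}$ along the slopes $p$, $-\frac{1}{q+1}$, $r$, leaving the two cusps at $C_{4},C_{5}$. Proposition~\ref{xpqr} already tells us that $X_{p,q,r}$ is irreducible with incompressible boundary, so its interior fails to be hyperbolic precisely when the triple $\left(p,-\frac{1}{q+1},r\right)$ is an exceptional (i.e.\ non-hyperbolic) filling of $M$ in the sense of \cite{martelli}. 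Before starting the casework I would record the orientation-reversing symmetry $(p,q,r)\mapsto(-p-1,-q-1,-r-1)$, which comes from the fact that $P_{l,m,n}$ is the mirror image of $P_{-l,-m,-n}$ (Definition~\ref{classical}) and hence preserves non-hyperbolicity: it interchanges $(3.1.1)\leftrightarrow(3.1.2)$ and $(3.1.4)\leftrightarrow(3.1.5)$ and fixes $(3.1.3)$, so it suffices to treat one member of each pair.

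For the ``if'' direction I would produce the non-hyperbolic structure in each family. If $-1\in\{p,q,r\}$, then after the cyclic permutation $P_{l,m,n}=P_{m,n,l}$ the knot $P_{2p+1,2q+1,2r+1}$ is a double twist knot, and the Seifert structure of the cut manifold is then available from \cite{sekino}; the case $0\in\{p,q,r\}$ follows from this by the symmetry above. For $\{-2,1\}\subset\{p,q,r\}$ I would exhibit the vertical essential annulus that produces the Seifert piece of \cite{cantwell}, and for the two sporadic triples $\{-2,2,2\}$ and $\{-3,-3,1\}$ I would exhibit the essential torus splitting off a trefoil complement. In each case $X_{p,q,r}$ is visibly non-hyperbolic.

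The substance is the converse: assuming none of $(3.1.1)$--$(3.1.5)$ holds, I must show $X_{p,q,r}$ is hyperbolic. The key simplification is that negating $(3.1.1)$ and $(3.1.2)$ forces $\{-1,0\}\cap\{p,q,r\}=\emptyset$, so each of the three filling slopes avoids its most degenerate values ($p,r\notin\{-1,0\}$, and $-\frac{1}{q+1}$ is a slope with $|q+1|\geq 2$ or $q+1<0$). I would then pin the slopes $p$, $-\frac{1}{q+1}$, $r$ down in the framing of \cite{martelli} and read off from their classification that, under these constraints, the only exceptional three-cusp fillings of $M$ are the finitely many triples corresponding to $(3.1.3)$, $(3.1.4)$ and $(3.1.5)$. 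As these are excluded by hypothesis, $X_{p,q,r}$ is hyperbolic, completing the ``only if'' direction.

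The main obstacle is twofold. First is the bookkeeping of conventions: the slopes $p$, $-\frac{1}{q+1}$, $r$ must be transcribed correctly into \cite{martelli}'s meridian--longitude framing, using the symmetry group of the minimally twisted five-chain link to normalize the three filled cusps, and an error here would misidentify the exceptional set. Second, since only three of the five cusps are filled, I cannot merely intersect per-cusp lists of exceptional slopes: I must rule out that some combination of three individually non-exceptional slopes creates an essential torus or annulus in the two-cusped result. It is exactly this global information that the complete classification of \cite{martelli} supplies, and invoking it correctly---in full, rather than one cusp at a time---is the crux of the argument.
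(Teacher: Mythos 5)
Your overall strategy is the same as the paper's: identify $\overline{P_{2p+1,2q+1,2r+1}(0)\setminus T'}$ with a partial filling of the minimally twisted five-chain link via Proposition~\ref{xpqr}, use the mirror symmetry $(p,q,r)\mapsto(-p-1,-q-1,-r-1)$ to halve the casework, verify the five non-hyperbolic families by exhibiting their Seifert or toroidal structures, and invoke the classification of \cite{martelli} for the converse. The ``if'' direction as you sketch it is fine and matches what the paper does in subsection~\ref{nonhyp}.

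The gap is in the ``only if'' direction, where you propose to ``read off'' from \cite{martelli} that the only remaining exceptional triples are those of $(3.1.3)$--$(3.1.5)$. Two problems. A small one first: these are not ``finitely many triples''---$(3.1.3)$ is the infinite family $\{-2,1\}\subset\{p,q,r\}$ with the third parameter free. Substantively: the classification (Fact~\ref{fact5chain}) is stated only up to the action of the moves $(F\,3.1.1)$--$(F\,3.1.5)$ on filling tuples, so deciding whether a given partial filling is exceptional requires computing its orbit under that action; nothing can be read off directly. In the generic case $-2,-1,0,1\notin\{p,q,r\}$ this is tractable precisely because no entry of $(-r,\tfrac{1}{q+1},-p,\phi,\phi)$ ever lands in the orbit $\{-1,\tfrac{1}{2},2\}$ of $-1$, so the special moves $(F\,3.1.4)$ and $(F\,3.1.5)$ never activate, and one only has to check that no entry reaches the orbit $\{0,1,\infty\}$ of $\infty$ (the longer forbidden tuples cannot occur with two cusps left unfilled). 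But when $1\in\{p,q,r\}$ or $-2\in\{p,q,r\}$, one of the slopes does lie in the orbit of $-1$, the extra moves fire, and the orbit is no longer controlled by this entrywise argument. The paper escapes by performing the corresponding $(-1)$-filling first, dropping down to the chain links L8n7 and L6a5 and using their separate classifications (Facts~\ref{fact4chain} and~\ref{fact3chain} and Remark~\ref{onefilling}); it is exactly in these reduced pictures that the sporadic answers $(3.1.3)$, $(3.1.4)$ and $(3.1.5)$ emerge. Your outline does not anticipate this case division, and without it the crucial step of your argument cannot be completed as stated.
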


\subsection{Non-hyperbolic cases}\label{nonhyp}
Through the identification in Proposition~\ref{xpqr} and classification result in \cite{martelli}, we can find the structures of $\overline{P_{2p+1,2q+1,2r+1}(0)\setminus T'}$ which are non-hyperbolic in \cite{martelli}. 
However, we list the structures for completeness. 

\subsubsection{The case where $-1\in \{p,q,r\}$ }
Since $P_{2p+1,2q+1,2r+1}$ is equivalent to $P_{2q+1,2r+1,2p+1}$ and the standard Seifert surface $T$ is compatible with this equivalence, we assume that $q=-1$. 
We use Proposition~\ref{xpqr} and the surgery description in Figure~\ref{5chain_mirror}. 
Then we change the diagram as in Figure~\ref{5-chain_mirror_qminus1}. 
By the torus depicted in the right of Figure~\ref{5-chain_mirror_qminus1}, the 3-manifold is decomposed into two Seifert manifolds. 
The base surfaces of these are annuli with one exceptional points, and the orders of them are $|p|$ and $|r|$. 
Of course, the entire 3-manifold is Seifert if $p=\pm1$ or $r=\pm1$. 
Note that neither $p$ nor $r$ is $0$ since $P_{2p+1,2q+1,2r+1}$ is not the unknot and $-1\in \{p,q,r\}$.

\begin{figure}[htbp]
 \begin{center}
  \includegraphics[width=100mm]{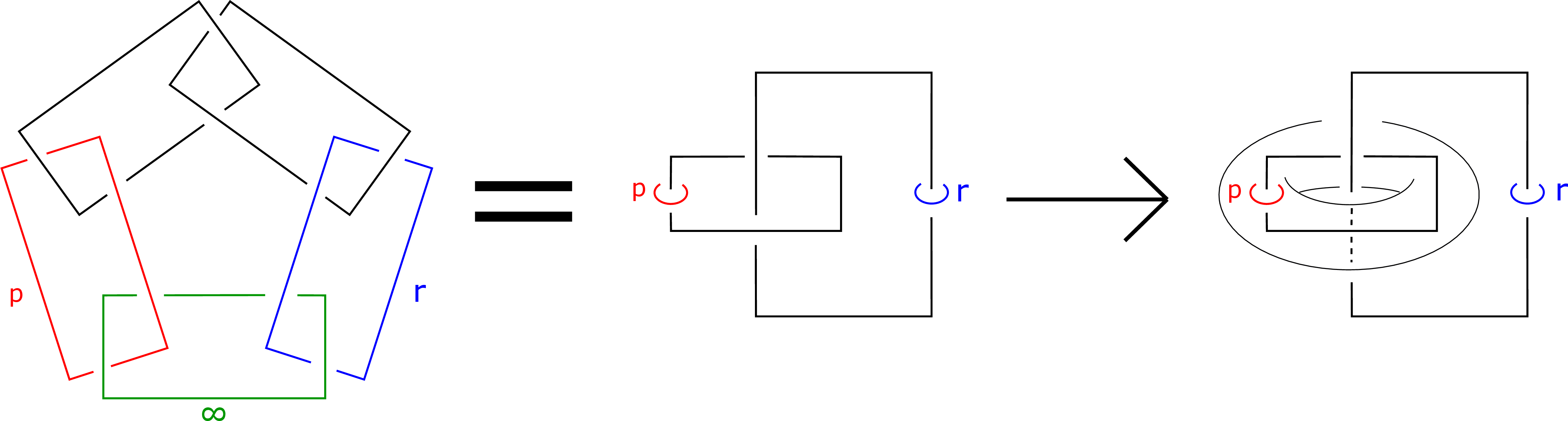}
 \end{center}
 \caption{A surgery description for $q=-1$}
 \label{5-chain_mirror_qminus1}
\end{figure}

\subsubsection{The case where $0\in \{p,q,r\}$ }
Since $P_{2p+1,2q+1,2r+1}$ is equivalent to $P_{2q+1,2r+1,2p+1}$ and the standard Seifert surface $T$ is compatible with this equivalence, we assume that $q=0$. 
We use Proposition~\ref{xpqr} and the surgery description in Figure~\ref{5chain_mirror}. 
Then we change the diagram as in Figure~\ref{5chain_mirror_q0}. 
By the torus depicted in the right of Figure~\ref{5chain_mirror_q0}, the 3-manifold is decomposed into two Seifert manifolds. 
The base surfaces of these are annuli with one exceptional points, and the orders of them are $|p+1|$ and $|r+1|$. 
Of course, the entire 3-manifold is Seifert if $p+1=\pm1$ or $r+1=\pm1$. 
Note that neither $p$ nor $r$ is $-1$ since $P_{2p+1,2q+1,2r+1}$ is not the unknot and $0\in \{p,q,r\}$.

\begin{figure}[htbp]
 \begin{center}
  \includegraphics[width=120mm]{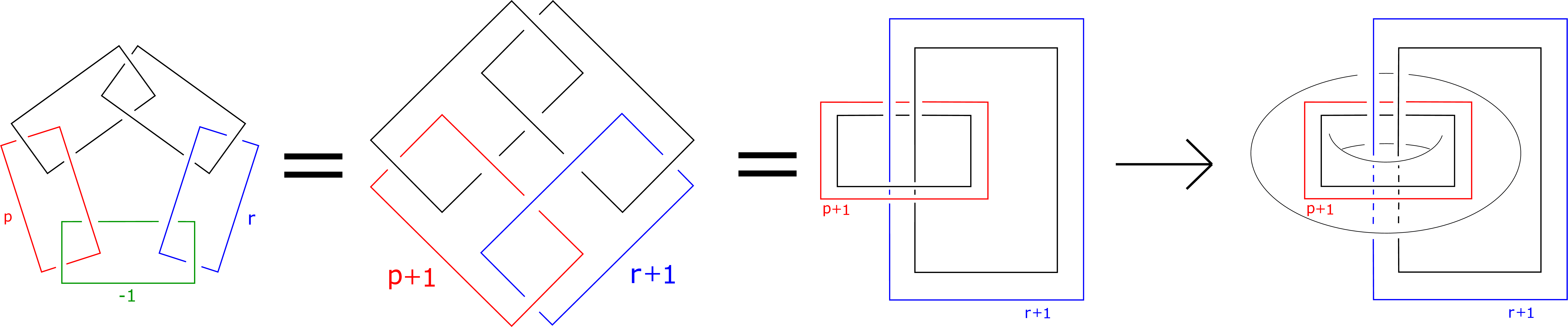}
 \end{center}
 \caption{A surgery description for $q=0$}
 \label{5chain_mirror_q0}
\end{figure}

\subsubsection{The case where $\{-2,1\}\subset \{p,q,r\}$}
Though the fact that $\overline{P_{2p+1,-3,3}(0)\setminus T'}$ is homeomorphic to the complement of $(2,4)$-torus link is stated at Theorem 1.5 of \cite{cantwell}, we give a proof for the completeness. 
Since $P_{2p+1,2q+1,2r+1}$ is equivalent to $P_{2q+1,2r+1,2p+1}$ and the standard Seifert surface $T$ is compatible with this equivalence, we assume that $q=-2$. 
Moreover, $P_{2\cdot 1+1,2(-2)+1,2r+1}$ is the mirror of $P_{2(-2)+1,2\cdot1+1,2(-r-1)+1}$ and $P_{2(-2)+1,2\cdot1+1,2(-r-1)+1}$ is equivalent to $P_{2(-r-1)+1,2(-2)+1,2\cdot1+1}$, and the standard Seifert surface $T$ is compatible with these equivalences, we assume that $r=1$. 
We use Proposition~\ref{xpqr} and the surgery description in Figure~\ref{5chain_mirror}. 
Then we change the diagram as in Figure~\ref{5chain_mirror_qminus2r1}. 
In the last of Figure~\ref{5chain_mirror_qminus2r1}, we can give the 3-manifold a Seifert structure such that the meridian of $\frac{1}{2}$-framed unknot is a regular fiber. 
Under this Seifert structure, the base surface is annulus with one exceptional point of order two. 

\begin{figure}[htbp]
 \begin{center}
  \includegraphics[width=140mm]{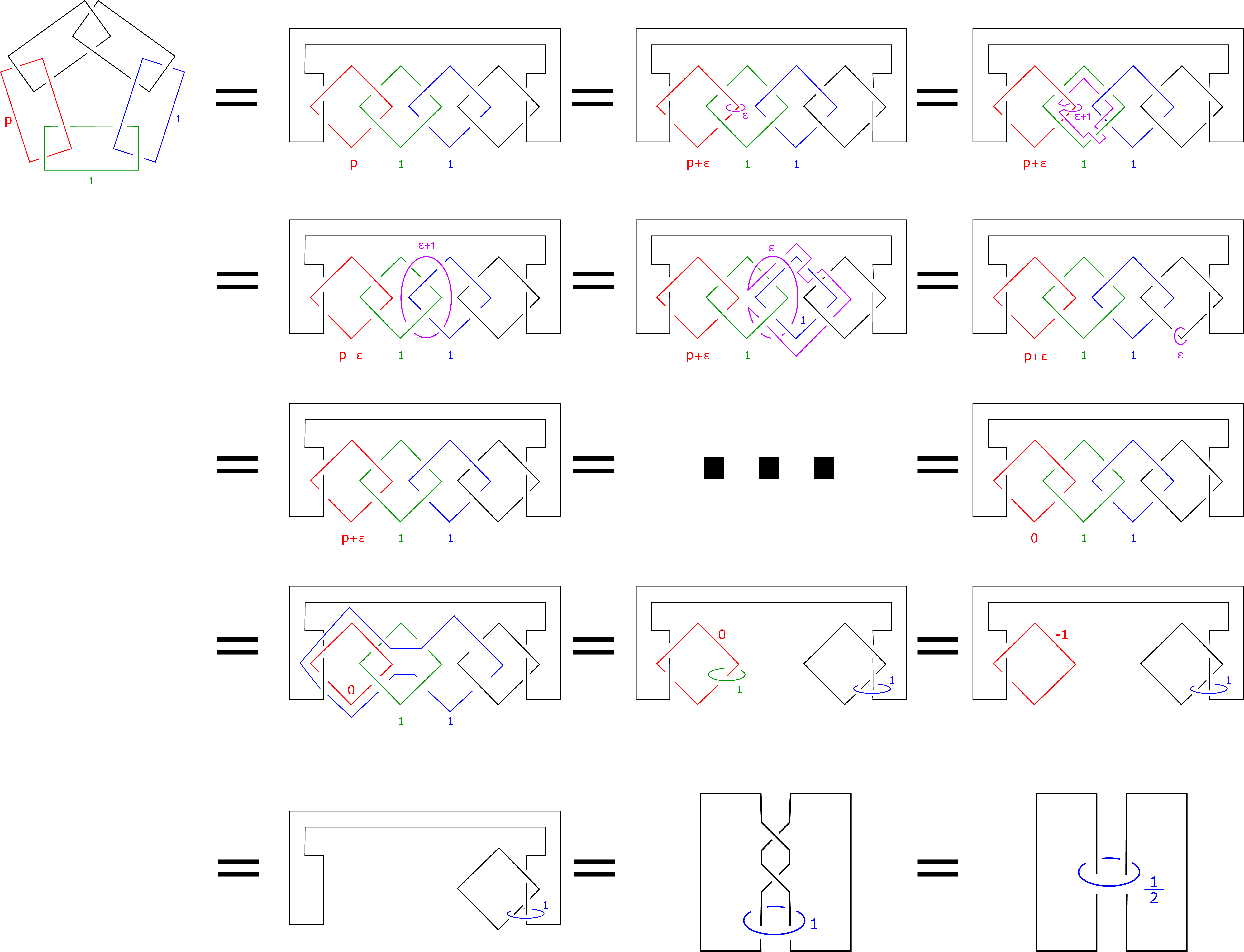}
 \end{center}
 \caption{A surgery description for $q=-2$ and $r=1$, where $\epsilon$ is $1$ or $-1$}
 \label{5chain_mirror_qminus2r1}
\end{figure}

\subsubsection{The case where $\{p,q,r\}=\{-2,2,2\}$}
Since $P_{2p+1,2q+1,2r+1}$ is equivalent to $P_{2q+1,2r+1,2p+1}$ and the standard Seifert surface $T$ is compatible with this equivalence, we assume that $p=2$, $q=-2$ and $r=2$. 
We use Proposition~\ref{xpqr} and the surgery description in Figure~\ref{5chain_mirror}. 
Then we change the diagram as in Figure~\ref{5chain_mirror_p2qminus2r2}. 
The last of Figure~\ref{5chain_mirror_p2qminus2r2} can be decomposed into two pieces, one of which is the complement of a small neighborhood of the left-handed trefoil as in the top of Figure~\ref{p2qminus2r2} by splitting along a torus. 
Note that the complement of a small neighborhood of the left-handed trefoil is a Seifert manifold since the trefoil is a fibered knot of genus one whose monodromy is periodic of order six. 
Concretely, its base surface is a disk with two exceptional points of order two and three. 
Note that a regular fiber of it is the longitude of the trefoil which is obtained by spiraling the canonical one six times along the meridian. 
The other piece is $\Sigma_{0,3}\times S^1$ as we can see in the bottom of Figure~\ref{p2qminus2r2}, where a shaded surface in the right is one of its fiber. 
Since a regular fiber of this $\Sigma_{0,3}\times S^1$-piece is a meridian of the trefoil in the other piece, the entire 3-manifold is not a Seifert manifold. 

\begin{figure}[htbp]
 \begin{center}
  \includegraphics[width=100mm]{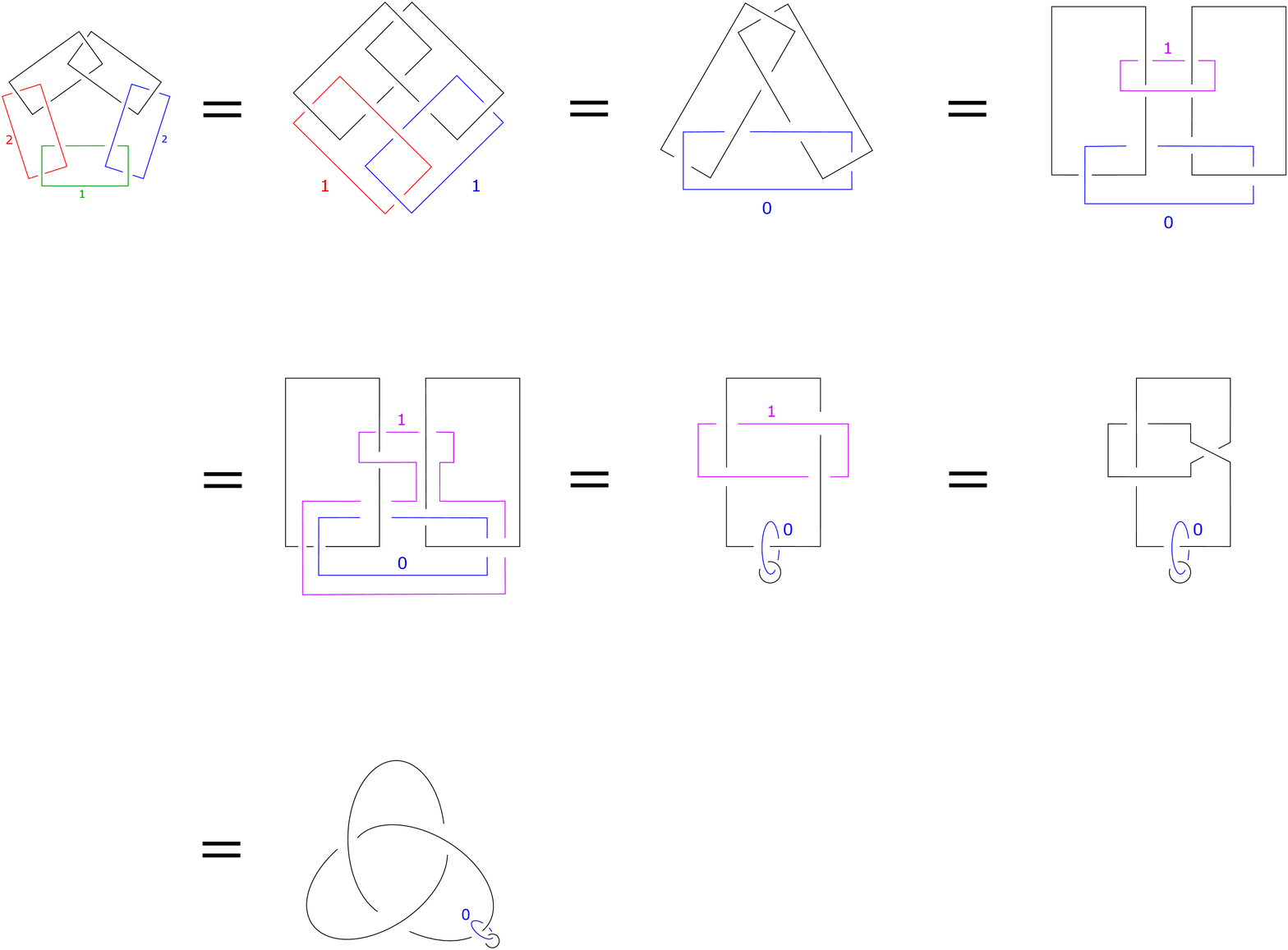}
 \end{center}
 \caption{A surgery description for $p=2$, $q=-2$ and $r=2$}
 \label{5chain_mirror_p2qminus2r2}
\end{figure}

\begin{figure}[htbp]
 \begin{center}
  \includegraphics[width=80mm]{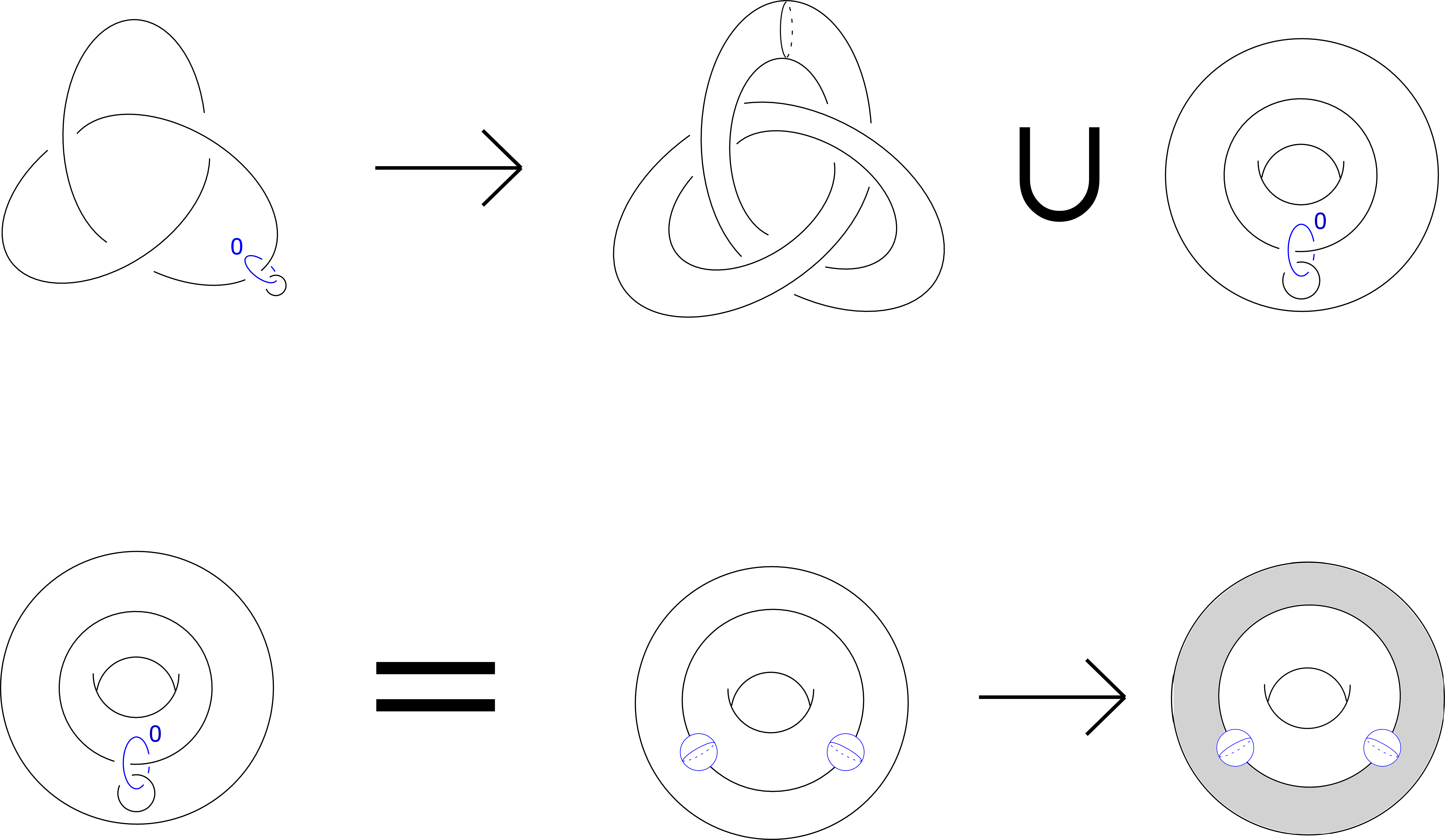}
 \end{center}
 \caption{Top: splitting along a torus\ \ \ \ \ Bottom: description for $\Sigma_{0,3}\times S^{1}$}
 \label{p2qminus2r2}
\end{figure}

\subsubsection{The case where $\{p,q,r\}=\{-3,-3,1\}$}
Since $P_{2p+1,2q+1,2r+1}$ is equivalent to $P_{2q+1,2r+1,2p+1}$ and the standard Seifert surface $T$ is compatible with this equivalence, we assume that $p=-3$, $q=1$ and $r=-3$. 
Since $P_{2(-3)+1,2\cdot1+1,2(-3)+1}$ is the mirror of $P_{2\cdot2+1,2(-2)+1,2\cdot2+1}$ and the standard Seifert surface $T$ is compatible with this equivalence, $\overline{P_{2(-3)+1,2\cdot1+1,2(-3)+1}(0)\setminus T'}$ is the mirror of $\overline{P_{2\cdot2+1,2(-2)+1,2\cdot2+1}(0)\setminus T'}$. 
Especially, there is a torus which splits the 3-manifold into the complement of a small neighborhood of the right-handed trefoil and $\Sigma_{0,3}\times S^1$.

\subsection{Hyperbolic cases}\label{hypcase}
In this section, we show that $\overline{P_{2p+1,2q+1,2r+1}(0)\setminus T'}$ is hyperbolic if $\{p,q,r\}$ is not in the cases listed in subsection~\ref{nonhyp}. 
We use the results in \cite{martelli}, which completely classify the exceptional (i.e. giving non-hyperbolic 3-manifold) Dehn fillings along minimally twisted 5-chain link, denoted by L10n113 in Thistlewaite's name and so on. 
Note that L10n113 is the mirror image of $L$ in the top of Figure~\ref{5chain_mirror_wirtinger}. 
We list the results in \cite{martelli} we need. 
As a notation, filling slopes of a component of a link are elements of $\mathbb{Q}\cup \{\infty\}\cup \{\phi\}$. 
A filling slope $\phi$ means leaving the corresponding component unfilled. 
For any self-function $f$ of $\mathbb{Q}\cup \{\infty\} $, we regard this as a self-function of $\mathbb{Q}\cup \{\infty\}\cup \{\phi\}$ by setting $f(\phi)=\phi$. 

\begin{fact}\label{fact5chain}(Theorem 0.1. of \cite{martelli})\\
Take {\rm L10n113} and give it filling slopes $(\alpha_{1}, \alpha_2,\alpha_3,\alpha_4,\alpha_5)\in \Bigl( \mathbb{Q}\cup \{\infty\}\cup \{\phi\} \Bigr)^{5}$ as in Figure~\ref{5chain}. 
Then the resultant of the filling is non-hyperbolic if and only if up to a composition of the following maps;
\begin{itemize}
\item[(F 3.1.1)] $(\alpha_{1}, \alpha_2,\alpha_3,\alpha_4,\alpha_5) \longmapsto (\alpha_{5}, \alpha_1,\alpha_2,\alpha_3,\alpha_4)$, 
\item[(F 3.1.2)] $(\alpha_{1}, \alpha_2,\alpha_3,\alpha_4,\alpha_5) \longmapsto (\alpha_{5}, \alpha_4,\alpha_3,\alpha_2,\alpha_1)$,
\item[(F 3.1.3)] $(\alpha_{1}, \alpha_2,\alpha_3,\alpha_4,\alpha_5) \longmapsto(\frac{1}{\alpha_{2}}, \frac{1}{\alpha_1},1-\alpha_3,\frac{\alpha_4}{\alpha_{4}-1},1-\alpha_5)$,
\item[(F 3.1.4)] $(-1, \alpha_2,\alpha_3,\alpha_4,\alpha_5) \longmapsto (-1, \alpha_{3}-1,\alpha_4,\alpha_5+1,\alpha_2)$,
\item[(F 3.1.5)] $(-1, -2,-2,-2,\alpha) \longmapsto (-1, -2,-2,-2,-\alpha-6)$,
\end{itemize}

 $(\alpha_{1}, \alpha_2,\alpha_3,\alpha_4,\alpha_5)$ contains one of the following.

\begin{itemize}
\item[] $\infty$,\hspace{1.0cm}$(-1,-2,-2,-1)$,\hspace{1.0cm}$(-2,-\frac{1}{2},3,3,-\frac{1}{2})$,\hspace{1.0cm}$(-1,-2,-2,-3,-5)$,
\item[]  $(-1,-2,-3,-2,-4)$,\hspace{1.0cm}$(-1,-3,-2,-2,-3)$,\hspace{1.0cm}$(-2,-2,-2,-2,-2)$
\end{itemize}
\end{fact}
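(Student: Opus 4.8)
The statement is the Martelli--Petronio--Roukema classification (Theorem 0.1 of \cite{martelli}), so I treat it as the target to be proved from scratch. Write $N$ for the complement of the minimally twisted five-chain link L10n113. The manifold $N$ is hyperbolic with five cusps, so by Thurston's hyperbolic Dehn filling theorem only finitely many filling slopes on each cusp can produce a non-hyperbolic manifold; the entire difficulty is to make ``finitely many'' effective and then to test every surviving tuple. The plan has three stages: (i) determine the full symmetry groupoid acting on slope-tuples, which accounts for the reduction ``up to a composition of (F 3.1.1)--(F 3.1.5)''; (ii) prove an explicit universal bound confining every exceptional slope to a short finite list; (iii) carry out the resulting finite case analysis, certifying each outcome as hyperbolic or identifying the non-hyperbolic representatives.

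For stage (i) I would compute the isometry group of $N$ from its Epstein--Penner canonical cell decomposition. The minimally twisted five-chain link complement admits an exceptionally symmetric ideal decomposition (into regular ideal octahedra), and reading off the combinatorial automorphisms yields the cyclic rotation of the five cusps (F 3.1.1) and the dihedral reflection (F 3.1.2), while an orientation-reversing involution composed with the standard meridian--longitude framing change gives (F 3.1.3). The moves (F 3.1.4) and (F 3.1.5) are \emph{not} symmetries of $N$: they are hidden symmetries that appear only after one cusp is filled with slope $-1$ (respectively after four cusps are filled with $(-1,-2,-2,-2)$). I would make these precise by recognizing the partially filled manifold $N(-1,\phi,\phi,\phi,\phi)$ as a smaller, more symmetric cusped manifold (another chain-link or ``magic''-type complement) whose extra isometries permute and reframe the remaining cusps exactly as in (F 3.1.4), and similarly for the one-parameter family giving (F 3.1.5). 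Thus the moves generate a groupoid rather than a single group, which is why two of them carry side conditions on the coordinates.

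For stage (ii) I would turn the cusp geometry of the octahedral structure into an effective slope bound. Computing the maximal cusp neighborhoods and the cusp shapes, I would apply the $6$-theorem of Agol and Lackenby: any slope of length exceeding $6$ in the maximal horospherical cross-section yields a hyperbolic (indeed Haken, atoroidal) filling. For simultaneous multi-cusp fillings one embeds disjoint horoball neighborhoods of controlled area so that the same length bound applies cusp-by-cusp, confining each exceptional slope to an explicit finite set. Combined with the symmetry reduction of stage (i), this leaves only finitely many slope-tuples to examine modulo the groupoid. The remaining stage (iii) is then a finite but large verification: for each surviving tuple I would certify hyperbolicity rigorously by producing a verified hyperbolic structure, while for the non-hyperbolic survivors I would identify the homeomorphism type explicitly. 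The case ``contains $\infty$'' is the meridional filling, which caps a component off and produces a reducible or degenerate manifold; the four-entry seeds such as $(-1,-2,-2,-1)$ are one-cusp-unfilled fillings that I would reduce by Kirby calculus to Seifert-fibered or graph manifolds; and the fully filled seeds, e.g. $(-2,-2,-2,-2,-2)$, I would recognize as small Seifert-fibered spaces. Matching these representatives against the seven tuples in the statement completes the classification.

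The main obstacle is the interplay between stages (i) and (ii): the slope bound must be made small and uniform enough across all cusps and, crucially, across all partial fillings, since filling cusps degenerates the geometry and alters the remaining cusp shapes. In particular, establishing the hidden partial symmetries (F 3.1.4)--(F 3.1.5) by bootstrapping extra isometries as cusps are filled---rather than reading them off $N$ directly---is the delicate combinatorial-geometric core of the argument. A secondary difficulty is certifying the hyperbolic/non-hyperbolic dichotomy rigorously for the many borderline tuples that survive the bound.
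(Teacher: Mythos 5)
This statement is not proved in the paper at all: it is labelled a \emph{Fact} and imported verbatim as Theorem 0.1 of \cite{martelli}, serving purely as an external input to the classification in Section 3. So there is no internal proof to compare against; what you have written is an attempt at the Martelli--Petronio--Roukema theorem itself. Your three-stage outline does capture the architecture of the actual proof in the literature: the moves (F 3.1.1)--(F 3.1.3) do come from symmetries of the five-chain complement acting on slopes, and you are right that (F 3.1.4)--(F 3.1.5) are not isometries of the unfilled manifold but hidden symmetries visible only after partial filling --- indeed the paper itself records that $(-1)$-filling any component of L10n113 yields L8n7, whose complement has extra cusp symmetries, which is precisely the mechanism behind (F 3.1.4). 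The reduction to finitely many candidate tuples via cusp geometry and the $6$-theorem, followed by case-by-case certification, is likewise the correct skeleton.

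However, as it stands this is a program, not a proof, and the gap is exactly where you locate it yourself: stages (ii) and (iii) contain all of the mathematical content and are left unexecuted. The $6$-theorem gives a slope bound only for the cusp cross-sections of the manifold at hand; under partial filling the remaining cusp shapes change, so the bound must be re-established for every partially filled manifold in the tree of cases, and nothing in your sketch controls this uniformly or certifies that the seven seed tuples (together with the groupoid action) exhaust the exceptional set --- completeness of the list is the whole theorem, and in \cite{martelli} it is obtained by a substantial, partly computer-assisted enumeration with rigorous hyperbolicity certification and explicit recognition of each non-hyperbolic filling. One factual slip should also be corrected: the minimally twisted five-chain link complement decomposes into ten regular ideal \emph{tetrahedra} (volume $\approx 10.149\ldots = 10\,v_3$), not regular ideal octahedra, so the symmetry computation in your stage (i) must be run on that triangulation. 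For the purposes of the present paper none of this is needed: the correct move is simply to cite \cite{martelli}, as the author does.
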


\begin{figure}[htbp]
 \begin{center}
  \includegraphics[width=40mm]{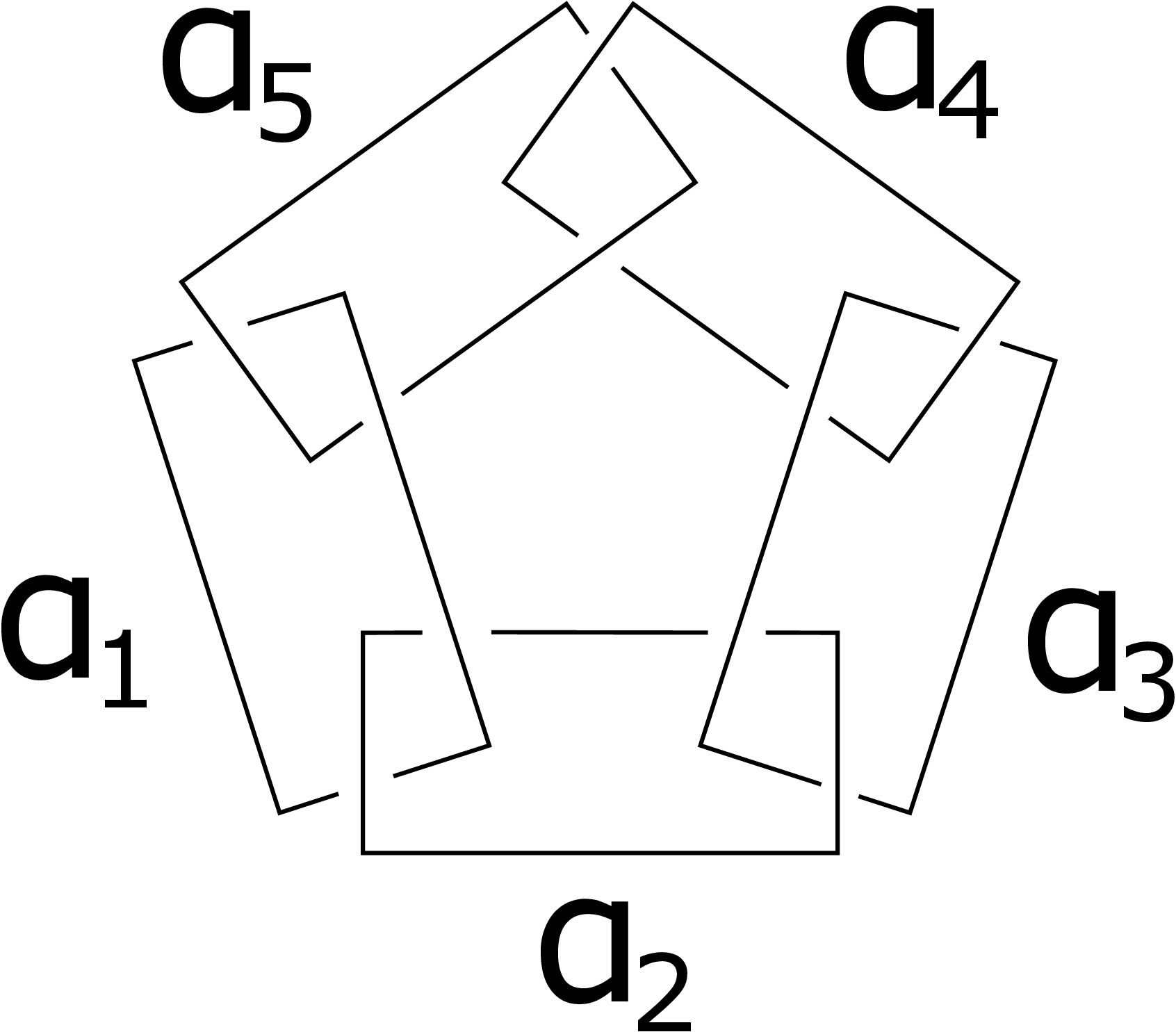}
 \end{center}
 \caption{A link L10n113 with filling slopes}
 \label{5chain}
\end{figure}

By $(-1)$-filling to any component of L10n113, we get a link L8n7 in Thistlewaite's name (or $8^{4}_{2}$ in Rolfsen's name). 
And by $(-1)$-filling to any component of L8n7, we get a link L6a5 in Thistlewaite's name (or $6^{3}_{1}$ in Rolfsen's name). 
Thus the following facts are contained in Fact~\ref{fact5chain}. 
However, we list these for our use. 

\begin{fact}\label{fact4chain} (Theorem 3.5. of \cite{martelli})\\
Take {\rm L8n7} and give it filling slopes $(\alpha_{1}, \alpha_2,\alpha_3,\alpha_4)\in \Bigl( \mathbb{Q}\cup \{\infty\}\cup \{\phi\} \Bigr)^{4}$ as in Figure~\ref{4chain}. 
Then the resultant of the filling is non-hyperbolic if and only if up to a composition of the following maps;
\begin{itemize}
\item[(F 3.2.1)] $(\alpha_{1}, \alpha_2,\alpha_3,\alpha_4) \longmapsto (\alpha_{4}, \alpha_1,\alpha_2,\alpha_3)$, 
\item[(F 3.2.2)] $(\alpha_{1}, \alpha_2,\alpha_3,\alpha_4) \longmapsto (\alpha_4,\alpha_3,\alpha_2,\alpha_1)$,
\item[(F 3.2.3)] $(\alpha_{1}, \alpha_2,\alpha_3,\alpha_4) \longmapsto(\frac{\alpha_{1}-2}{\alpha_{1}-1}, \frac{\alpha_{2}-2}{\alpha_{2}-1},\frac{\alpha_{3}-2}{\alpha_{3}-1},\frac{\alpha_{4}-2}{\alpha_{4}-1})$,
\item[(F 3.2.4)] $(\alpha_{1}, \alpha_2,\alpha_3,\alpha_4 \longmapsto (2-\alpha_1,\frac{\alpha_2}{\alpha_{2}-1}, 2-\alpha_3,\frac{\alpha_{4}}{\alpha_4-1})$,
\item[(F 3.2.5)] $(-1, \alpha_2,\alpha_3,\alpha_4) \longmapsto (-1, \alpha_{3}-1,\alpha_2+1,\alpha_4)$,
\item[(F 3.2.6)] $(-1, -2,-2,\alpha) \longmapsto (-1, -2,-2,-\alpha-4)$,
\end{itemize}

 $(\alpha_{1}, \alpha_2,\alpha_3,\alpha_4)$ contains one of the following.

\begin{itemize}
\item[] $0$,\hspace{1.0cm}$\infty$,\hspace{1.0cm}$(-1,-2,-1)$,\hspace{1.0cm}$(-2,-2,-2,-2)$,
\item[] $(-1,-3,-2,-3)$,\hspace{1.0cm}$(-1,-2,-3,-4)$
\end{itemize}
\end{fact}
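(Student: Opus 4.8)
The plan is to obtain Fact~\ref{fact4chain} as a specialization of Fact~\ref{fact5chain}, using the filling relationship recorded just above its statement: L8n7 is obtained from L10n113 by $(-1)$-filling one of its five components. First I would pin down this identification precisely, say by declaring that filling the component carrying the slope $\alpha_1$ along $-1$ produces L8n7, with its four remaining components inheriting the slots $\alpha_2,\ldots,\alpha_5$. Granting this, a filling of L8n7 along $(\beta_1,\beta_2,\beta_3,\beta_4)$ \emph{is} the filling of L10n113 along $(-1,\beta_1,\beta_2,\beta_3,\beta_4)$, and the two resulting $3$-manifolds are homeomorphic. Hence the non-hyperbolicity of the L8n7-filling is governed exactly by Fact~\ref{fact5chain} applied to the $5$-tuple $(-1,\beta_1,\beta_2,\beta_3,\beta_4)$, and the whole problem reduces to showing that this restricted form of Fact~\ref{fact5chain} is equivalent, as a predicate on $4$-tuples, to the statement of Fact~\ref{fact4chain}.

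I would carry out this equivalence in two matching steps. \textbf{Symmetries.} Writing $G_5$ for the group generated by (F 3.1.1)--(F 3.1.5) and $G_4$ for the group generated by (F 3.2.1)--(F 3.2.6), I would check that each generator of $G_4$ preserves the non-hyperbolic set of $4$-fillings. The cyclic and reversal maps (F 3.2.1), (F 3.2.2) are inherited from (F 3.1.1), (F 3.1.2); since these move the distinguished $(-1)$-slot, I would precompose with shifts to return it to the first coordinate and read off the induced permutation of the remaining four. The specialized moves (F 3.2.5), (F 3.2.6), which require a further coordinate to equal $-1$ or $-2$, correspond to $5$-tuples with two special entries and should be matched against (F 3.1.4), (F 3.1.5). \textbf{Exceptional list.} I would verify that prepending $-1$ to each $4$-pattern of Fact~\ref{fact4chain} (inserting $\phi$ for components a short pattern leaves unfilled) yields a $5$-tuple that is $G_5$-equivalent to one of the seven patterns of Fact~\ref{fact5chain}, and conversely that every exceptional $5$-tuple beginning with $-1$ reduces, after stripping that coordinate, to a $G_4$-translate of a listed $4$-pattern.

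The main obstacle I anticipate is the bookkeeping forced by the nonlinear symmetry (F 3.1.3), which does not fix the distinguished $(-1)$-slot: applied to a $5$-tuple beginning with $-1$ it sends that entry to the reciprocal coordinate, so it must be composed with (F 3.1.1)/(F 3.1.2) to restore the $-1$ to the front, and the induced Möbius substitutions on the other four coordinates (of the shape $\tfrac{\alpha-2}{\alpha-1}$, $\tfrac{\alpha}{\alpha-1}$, $2-\alpha$) are precisely what must be shown to generate (F 3.2.3) and (F 3.2.4). This same move is, for instance, what converts a prepended $0$-slope into an $\infty$-slope (since $1/0=\infty$), thereby explaining why ``$0$'' appears in the exceptional list of Fact~\ref{fact4chain} although it is absent from that of Fact~\ref{fact5chain}. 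Assembling these substitutions consistently — tracking both the migrating $(-1)$-coordinate and the $\phi$-entries of the short patterns — is a finite but error-prone verification. I would organize it by leaning on the completeness direction of Fact~\ref{fact5chain}, which guarantees that every non-hyperbolic $4$-filling already occurs among the reductions so that nothing is missed, and then cross-checking the finitely many listed patterns to confirm that the two orbit-decompositions coincide.
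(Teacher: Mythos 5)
This statement is not proved in the paper at all: it is imported verbatim as Theorem 3.5 of \cite{martelli}, and the paper's only gloss is the one-sentence remark preceding it that L8n7 arises from L10n113 by a $(-1)$-filling, so that the fact is ``contained in'' Fact~\ref{fact5chain}; the paper then relies on the citation. Your proposal is an attempt to turn that remark into an actual derivation, so it follows the same reduction idea the paper gestures at, but it goes beyond what the paper does and what the paper needs.

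As a proof plan, the outline is reasonable but has one genuine gap and one large unexecuted computation. The gap is the very first step, which you flag but then simply assume: that a filling of L8n7 along $(\beta_1,\beta_2,\beta_3,\beta_4)$ \emph{is} the filling of L10n113 along $(-1,\beta_1,\beta_2,\beta_3,\beta_4)$ with the slopes carried over unchanged. The two manifolds are certainly homeomorphic, but the slopes in Fact~\ref{fact4chain} are measured in the standard peripheral bases of the standard diagram of L8n7, and identifying the $(-1)$-filled five-chain with that diagram involves an isotopy or Rolfsen-type move that can change the framings of the neighbouring components; since adjacent components of a chain link have linking number $\pm1$, this is exactly the situation in which such a change occurs. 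Whether the induced correspondence on the four remaining slopes is the identity or a shift on two of the coordinates is precisely the geometric content that must be established before any orbit computation means anything, and nothing in Fact~\ref{fact5chain} alone can supply it. The second issue is that the equivalence of the two orbit descriptions --- matching (F 3.2.1)--(F 3.2.6) against the stabilizer of the distinguished $-1$ slot inside the group generated by (F 3.1.1)--(F 3.1.5), and matching the two exceptional lists --- is a finite but genuinely delicate verification that you describe without carrying out; your own observation that $0$ appears in the L8n7 list but not in the L10n113 list shows the lists do not correspond term by term, so this check cannot be waved through. Since the paper's actual justification is the citation to \cite{martelli}, the cleanest course is to do the same; if you do want the derivation, the slope correspondence is the step that must be nailed down first.
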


\begin{figure}[htbp]
 \begin{center}
  \includegraphics[width=30mm]{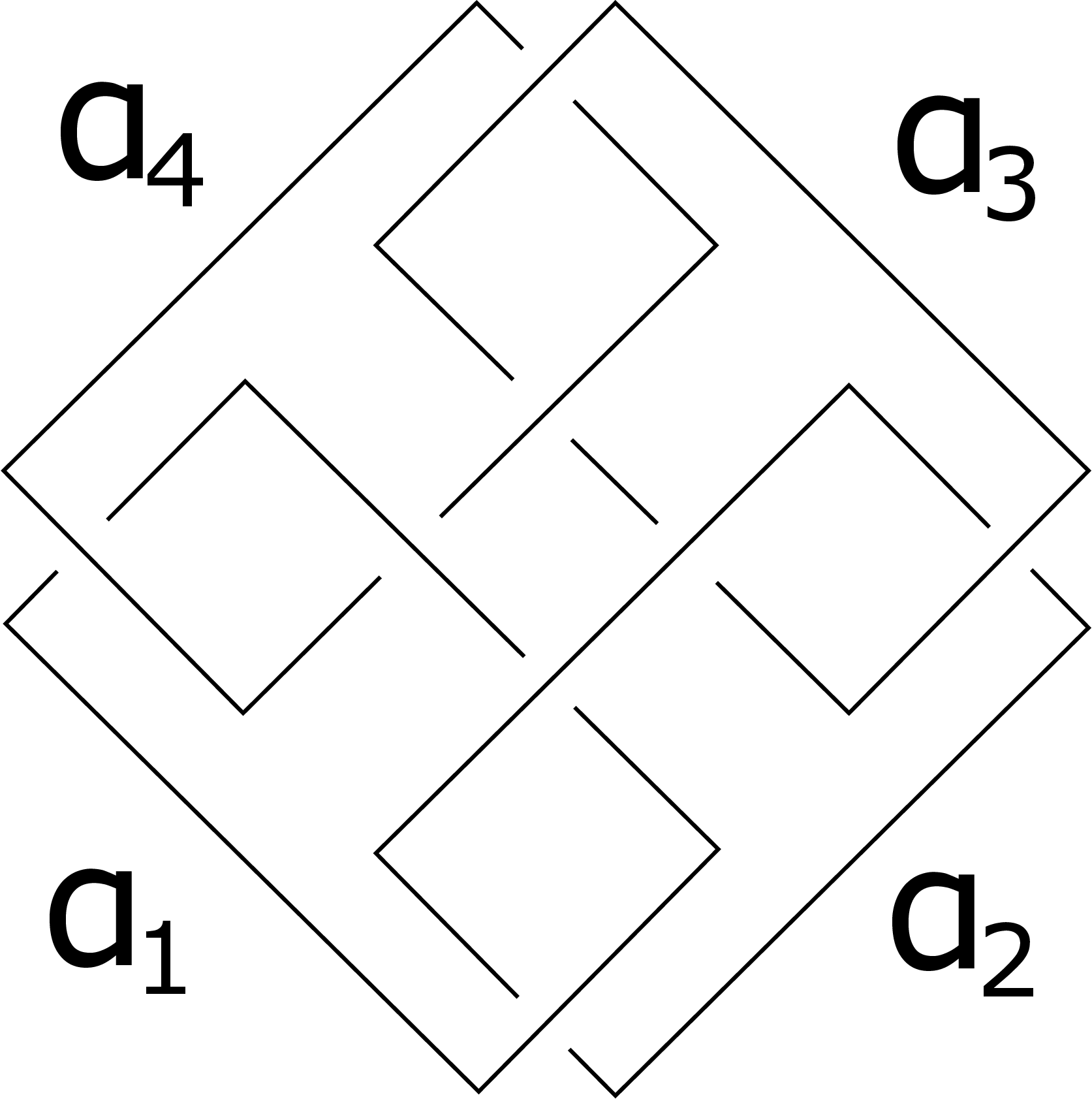}
 \end{center}
 \caption{A link L8n7 with filling slopes}
 \label{4chain}
\end{figure}

\begin{fact}\label{fact3chain} (Theorem 3.3. of \cite{martelli})\\
Take {\rm L6a5} and give it filling slopes $(\alpha_{1}, \alpha_2,\alpha_3)\in \Bigl( \mathbb{Q}\cup \{\infty\}\cup \{\phi\} \Bigr)^{3}$ as in Figure~\ref{3chain}. 
Then the resultant of the filling is non-hyperbolic if and only if up to a composition of the following maps;
\begin{itemize}
\item[(F 3.3.1)] $(\alpha_{1}, \alpha_2,\alpha_3) \longmapsto (\alpha_{3},\alpha_2,\alpha_1)$, 
\item[(F 3.3.2)] $(\alpha_{1}, \alpha_2,\alpha_3) \longmapsto (\alpha_2,\alpha_1,\alpha_3)$,
\item[(F 3.3.3)] $(\frac{1}{2}, \alpha_2,\alpha_3) \longmapsto(\frac{1}{2}, 4-\alpha_{2},4-\alpha_3)$,
\item[(F 3.3.4)] $(\frac{3}{2}, \alpha_2,\alpha_3) \longmapsto(\frac{3}{2}, \frac{2\alpha_{2}-5}{\alpha_{2}-2},\frac{2\alpha_{3}-5}{\alpha_{3}-2})$,
\item[(F 3.3.5)] $(\frac{5}{2}, \alpha_2,\alpha_3) \longmapsto(\frac{5}{2}, \frac{\alpha_{2}-3}{\alpha_{2}-2},\frac{2\alpha_{3}-3}{\alpha_{3}-1})$,
\item[(F 3.3.6)] $(4, \alpha_2,\alpha_3) \longmapsto (4, \frac{\alpha_2-2}{\alpha_2-1},\frac{\alpha_3-2}{\alpha_3-1})$,
\item[(F 3.3.7)] $(-1, -2,\alpha) \longmapsto (-1, -2,-\alpha-2)$,
\item[(F 3.3.8)] $(-1, 4,\alpha) \longmapsto (-1, 4,\frac{1}{\alpha})$,
\end{itemize}

 $(\alpha_{1}, \alpha_2,\alpha_3)$ contains one of the following.

\begin{itemize}
\item[] $0$, \ $1$, \ $2$, \ $3$, \ $\infty$, \ $(-1,-1)$, \ $(-1,-3,-3)$, \ $(-2,-2,-2)$, \ $(\frac{2}{3},4,4)$
\end{itemize}
\end{fact}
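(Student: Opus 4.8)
The plan is to deduce Fact~\ref{fact3chain} from Fact~\ref{fact4chain} (which itself descends from Fact~\ref{fact5chain}) via the $(-1)$-filling reduction already indicated in the text: L6a5 is obtained from L8n7 by a single $(-1)$-filling, and from L10n113 by two. First I would fix, by an explicit diagrammatic check, a correspondence between the three cusps of L6a5 in Figure~\ref{3chain} and three of the four cusps of L8n7 in Figure~\ref{4chain}, arranged so that the manifold obtained from L6a5 with slopes $(\alpha_1,\alpha_2,\alpha_3)$ is literally the manifold obtained from L8n7 with slopes $(\alpha_1,\alpha_2,\alpha_3,-1)$. This identification of slope-labelings is the foundation of everything that follows, and it must be carried out carefully, since the meridian--longitude conventions implicit in Figures~\ref{3chain} and \ref{4chain} need not agree a priori.

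Granting that identification, Fact~\ref{fact4chain} says the filling is non-hyperbolic if and only if the $4$-tuple $(\alpha_1,\alpha_2,\alpha_3,-1)$ can be carried, by a composition of (F 3.2.1)--(F 3.2.6), into a tuple containing a listed L8n7 exceptional value. The argument then splits into two matching computations. For the symmetries, I would isolate the subgroup of the group generated by (F 3.2.1)--(F 3.2.6) that preserves the locus $\{\text{some coordinate}=-1\}$, and show that the transformations it induces on the remaining three coordinates are exactly those generated by (F 3.3.1)--(F 3.3.8). For the exceptional set, I would run through each L8n7 exceptional tuple, determine all ways (up to the L8n7 symmetry group) in which it can acquire a coordinate equal to $-1$, and read off the other three coordinates, checking that the resulting triples are precisely $0,1,2,3,\infty,(-1,-1),(-1,-3,-3),(-2,-2,-2),(\tfrac{2}{3},4,4)$ up to (F 3.3.1)--(F 3.3.8); conversely I would verify that each triple in that list does lift to an L8n7 exceptional tuple carrying a $-1$, so that no case is lost.

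The main obstacle is the symmetry bookkeeping. The maps (F 3.3.3)--(F 3.3.6), which act only when the first coordinate is one of $\tfrac12,\tfrac32,\tfrac52,4$, are hidden symmetries of L6a5 that are invisible as symmetries of L8n7 itself; they cannot come from a single map in the list above but must instead arise from compositions that temporarily move the $-1$ coordinate into another slot through the fractional-linear maps (F 3.2.3) and (F 3.2.4), apply a permutation, and return it to its original slot. Producing these compositions explicitly, and proving that together with the manifestly inherited cyclic and reflection symmetries they generate the full stabilizer, is the delicate and error-prone step; the special values $\tfrac12,\tfrac32,\tfrac52,4$ should emerge precisely as the first-coordinate values at which such a composition closes up to fix the $-1$ slot. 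Once the induced symmetry group and the descent of the exceptional set are both pinned down, the equivalence asserted in Fact~\ref{fact3chain} follows by direct comparison, and running the identical scheme one level higher (setting a coordinate of the L10n113 slopes to $-1$) recovers Fact~\ref{fact4chain} from Fact~\ref{fact5chain} in the same way.
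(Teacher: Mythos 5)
First, note that the paper does not actually prove this statement: it is imported verbatim as Theorem 3.3 of \cite{martelli} (which in turn rests on the magic-manifold classification of \cite{martelli2}), and the surrounding text only remarks, without argument, that Facts~\ref{fact4chain} and \ref{fact3chain} are ``contained in'' Fact~\ref{fact5chain} via $(-1)$-fillings. So you are not reconstructing a proof the paper gives; you are attempting the derivation the paper merely alludes to. Judged on its own terms, your reduction strategy is the natural one, but two steps would fail as written. The first is the foundational identification: the manifold obtained from L6a5 with slopes $(\alpha_1,\alpha_2,\alpha_3)$ is \emph{not} the manifold obtained from L8n7 with slopes $(\alpha_1,\alpha_2,\alpha_3,-1)$. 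Blowing down a $(-1)$-framed chain component adds a full twist to the two components adjacent to it in the chain and shifts their surgery coefficients. You can see this forced by the data you are given: the L8n7 exceptional tuple $(-1,-2,-3,-4)$ must descend, after filling its $-1$ entry, to the L6a5 tuple $(-1,-3,-3)$, and $(-1,-3,-2,-3)$ must descend to $(-2,-2,-2)$ --- both consistent with ``shift the two neighbours of the filled component by $+1$'' and inconsistent with the identity correspondence. Since every subsequent computation in your plan is carried out in coordinates, this shift propagates through all of it.

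Second, reducing the combinatorics to ``the stabilizer of the locus where some coordinate equals $-1$'' does not give the only-if direction. Fact~\ref{fact4chain} declares $(\alpha_1,\alpha_2,\alpha_3,-1)$ exceptional when its orbit under (F~3.2.1)--(F~3.2.6) meets the listed set \emph{anywhere}, including at tuples none of whose coordinates is $-1$; such hits are invisible to the stabilizer. Moreover the listed set is not a finite set of tuples but contains infinite families (every tuple containing $0$ or $\infty$), so ``run through each exceptional tuple and determine all ways it can acquire a $-1$'' is not a finite check as organized; what is really needed is to saturate the listed set under the full groupoid of partial maps and intersect with the $-1$ hyperplane, which is precisely the hard bookkeeping carried out in \cite{martelli} and \cite{martelli2} (and, historically, the three-chain classification of \cite{martelli2} is an input to the five-chain theorem, not a consequence of it). None of this casts doubt on the statement itself --- it is a quoted theorem --- but your proposal as written would not close up into a proof of it, and the cleanest course is the one the paper takes: cite the three-chain classification directly.
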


\begin{figure}[htbp]
 \begin{center}
  \includegraphics[width=30mm]{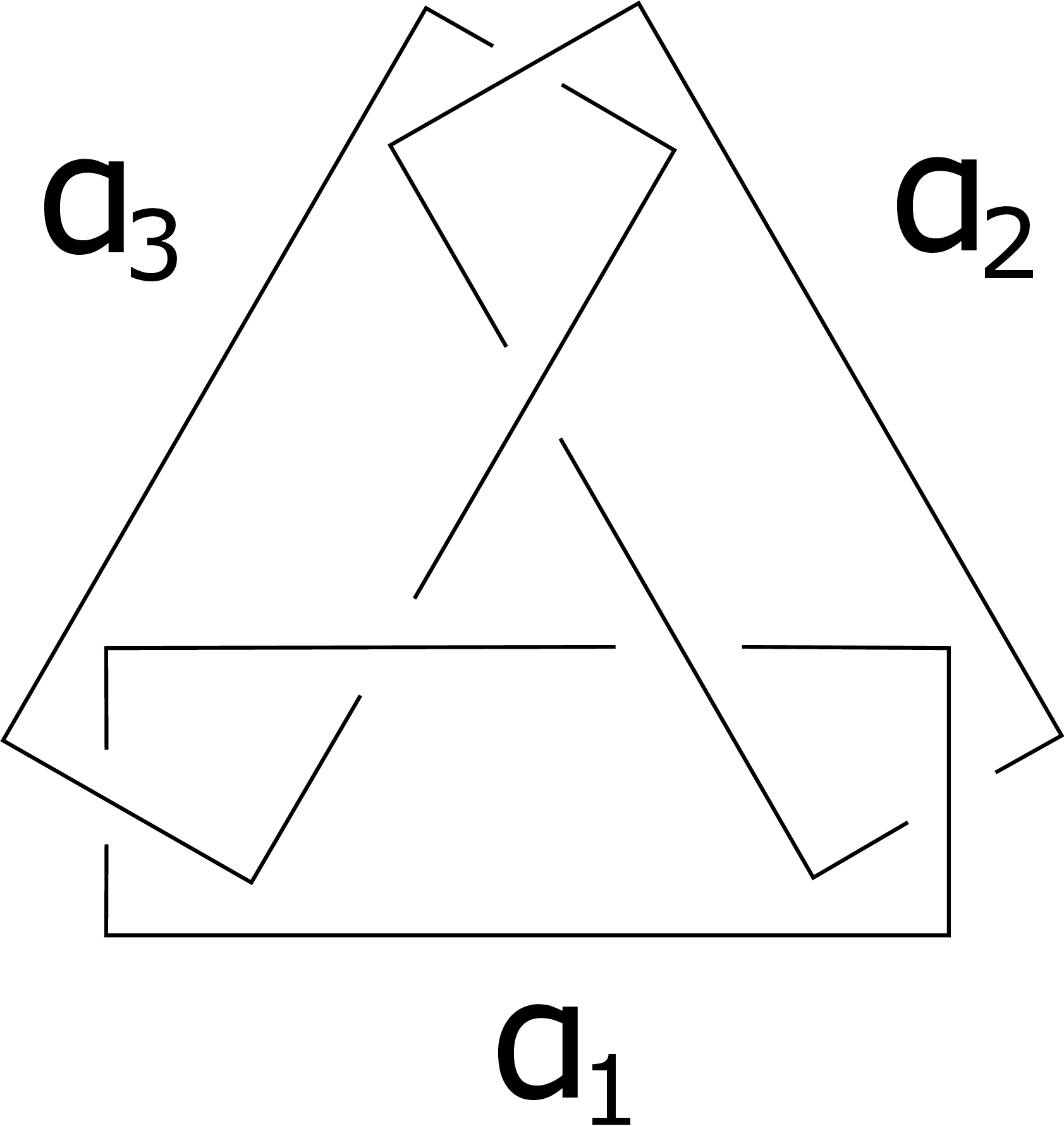}
 \end{center}
 \caption{A link L6a7 with filling slopes}
 \label{3chain}
\end{figure}

\begin{rmk}\label{onefilling}
We use Fact~\ref{fact3chain} only in one component filling i.e. filling slope $(\alpha,\phi,\phi)$ for $\alpha \in \mathbb{Q}\cup \{\infty\}$. 
Note that the position of $\alpha$ is not important because of the operations $(F 3.3.1)$ and $(F 3.3.2)$. 
In this case, the orbit of $(\alpha,\phi,\phi)$ under the operations from $(F 3.3.1)$ to $(F 3.3.8)$ is easily computed. 
As a result, we see that the filling of L6a7 with filling slope $(\alpha,\phi,\phi)$ is exceptional if and only if $\alpha=0$, $1$, $2$, $3$ or $\infty$. 
This result is already stated in \cite{martelli2}. 
Note that three-chain links in \cite{martelli2} and \cite{martelli} are mirror of each other.  
\end{rmk}

\subsubsection{The case where $-2, -1, 0, 1\notin \{p,q,r\}$}
Take a surgery description in Figure~\ref{5chain_mirror} for $\overline{P_{2p+1,2q+1,2r+1}(0)\setminus T'}$. 
By taking its mirror, we see that this is the mirror of the resultant of the filling of $L10n113$ in Figure~\ref{5chain} with filling slope $(-r,\frac{1}{q+1},-p,\phi,\phi)$. 
Thus to prove that this is hyperbolic, it is enough to prove that every element in the orbit of $(-r,\frac{1}{q+1},-p,\phi,\phi)$ under operations from $(F 3.1.1)$ to $(F 3.1.5)$ does not contain $\infty$ by Fact~\ref{fact5chain}. 
Set $f_1$, $f_2$ and $f_3$ to be functions such that $f_{1}(x)=\frac{1}{x}$, $f_{2}(x)=1-x$ and $f_{3}(x)=\frac{x}{x-1}$, which are functions appearing in the entries in operations from $(F 3.1.1)$ to $(F 3.1.3)$. 
Note that the orbit of $-1$ under $f_{1}$, $f_{2}$ and $f_{3}$ is $\{-1,\frac{1}{2},2\}$. 
Then under the condition $-2, -1, 0, 1\notin \{p,q,r\}$, the operations $(F 3.1.4)$ and $(F 3.1.5)$ do not occur in the computation of the orbit of $(-r,\frac{1}{q+1},-p,\phi,\phi)$ under operations from $(F 3.1.1)$ to $(F 3.1.5)$. 
Moreover note that the orbit of $\infty$ under $f_{1}$, $f_{2}$ and $f_{3}$ is $\{0,1,\infty \}$.  
Thus under the condition $-2, -1, 0, 1\notin \{p,q,r\}$, every element in the orbit of $(-r,\frac{1}{q+1},-p,\phi,\phi)$ under operations from $(F 3.1.1)$ to $(F 3.1.3)$ does not contain $\infty$. 
Therefore we can conclude that $\overline{P_{2p+1,2q+1,2r+1}(0)\setminus T'}$ is hyperbolic if $-2, -1, 0, 1\notin \{p,q,r\}$.

\subsubsection{The case where $1\in \{p,q,r\}$}
Since $P_{2p+1,2q+1,2r+1}$ is equivalent to $P_{2q+1,2r+1,2p+1}$ and the standard Seifert surface $T$ is compatible with this equivalence, we assume that $r=1$. 

\subparagraph{3.2.2.1 The case where $-3, -2, -1, 0, 1 \notin\{p,q\}$} 

Take a surgery description in Figure~\ref{5chain_mirror} for $\overline{P_{2p+1,2q+1,2\cdot1+1}(0)\setminus T'}$. 
In this case, we can change the diagram to a four component link as in Figure~\ref{hyp_r1}. 
By taking its mirror, we see that this is the mirror of the resultant of the filling of $L8n7$ in Figure~\ref{4chain} with filling slope $(\frac{q+2}{q+1},-p,\phi,\phi)$. 
Thus to prove that this is hyperbolic, it is enough to prove that every element in the orbit of $(\frac{q+2}{q+1},-p,\phi,\phi)$ under operations from $(F 3.2.1)$ to $(F 3.2.6)$ does not contain $0$ or $\infty$ by Fact~\ref{fact4chain}. 
Set $f_1$, $f_2$ and $f_3$ to be functions such that $f_{1}(x)=\frac{x-2}{x-1}$, $f_{2}(x)=2-x$ and $f_{3}(x)=\frac{x}{x-1}$, which are functions appearing in the entries in operations from $(F 3.2.1)$ to $(F 3.2.4)$. 
Note that the orbit of $-1$ under $f_{1}$, $f_{2}$ and $f_{3}$ is $\{-1,\frac{1}{2},\frac{3}{2},3\}$. 
Then under the condition $-3, -2, -1, 0, 1\notin \{p,q\}$, the operations $(F 3.2.5)$ and $(F 3.2.6)$ do not occur in the computation of the orbit of $(\frac{q+2}{q+1},-p,\phi,\phi)$ under operations from $(F 3.2.1)$ to $(F 3.2.6)$. 
Moreover note that  the orbit of $0$ under $f_{1}$, $f_{2}$ and $f_{3}$ is $\{0,2 \}$ and that of $\infty$ under $f_{1}$, $f_{2}$ and $f_{3}$ is $\{1,\infty \}$.  
Thus under the condition $-3, -2, -1, 0, 1\notin \{p,q\}$, every element in the orbit of $(\frac{q+2}{q+1},-p,\phi,\phi)$ under operations from $(F 3.2.1)$ to $(F 3.2.4)$ contains neither $0$ nor $\infty$. 
Therefore we can conclude that $\overline{P_{2p+1,2q+1,2\cdot 1+1}(0)\setminus T'}$ is hyperbolic if $-3, -2, -1, 0, 1\notin \{p,q\}$.

\begin{figure}[htbp]
 \begin{center}
  \includegraphics[width=70mm]{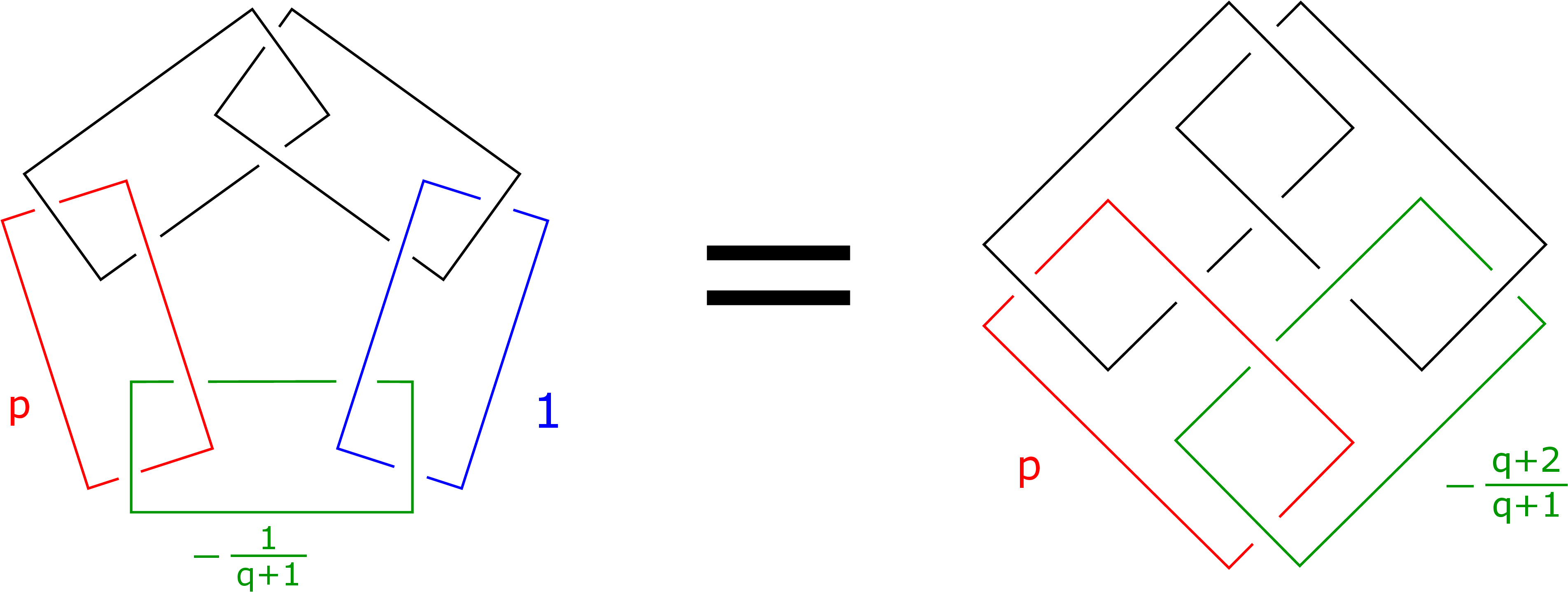}
 \end{center}
 \caption{A surgery description of $\overline{P_{2p+1,2q+1,2\cdot1+1}(0)\setminus T'}$}
 \label{hyp_r1}
\end{figure}

\subparagraph{3.2.2.2 The case where $1\in \{p,q\}$}

Since $P_{2p+1,2\cdot1+1,2\cdot1+1}$ is equivalent to $P_{2\cdot1+1,2p+1,2\cdot1+1}$ and the standard Seifert surface $T$ is compatible with this equivalence, we assume that $p=1$. 
Take a surgery description in Figure~\ref{5chain_mirror} for $\overline{P_{2\cdot1+1,2q+1,2\cdot1+1}(0)\setminus T'}$. 
In this case, we can change the diagram to a three component link as in Figure~\ref{hyp_p1r1}. 
By taking its mirror, we see that this is the mirror of the resultant of the filling of L6a7 in Figure~\ref{3chain} with filling slope $(\frac{2q+3}{q+1},\phi,\phi)$. 
By Remark~\ref{onefilling}, we see that $\overline{P_{2\cdot1+1,2q+1,2\cdot1+1}(0)\setminus T'}$ is hyperbolic if and only if $q\neq -2, -1, 0$. 
The other cases, where $q=-2,-1,0$ (under $p=r=1$) have already contained in the cases in subsection~\ref{nonhyp}. 

\begin{figure}[htbp]
 \begin{center}
  \includegraphics[width=100mm]{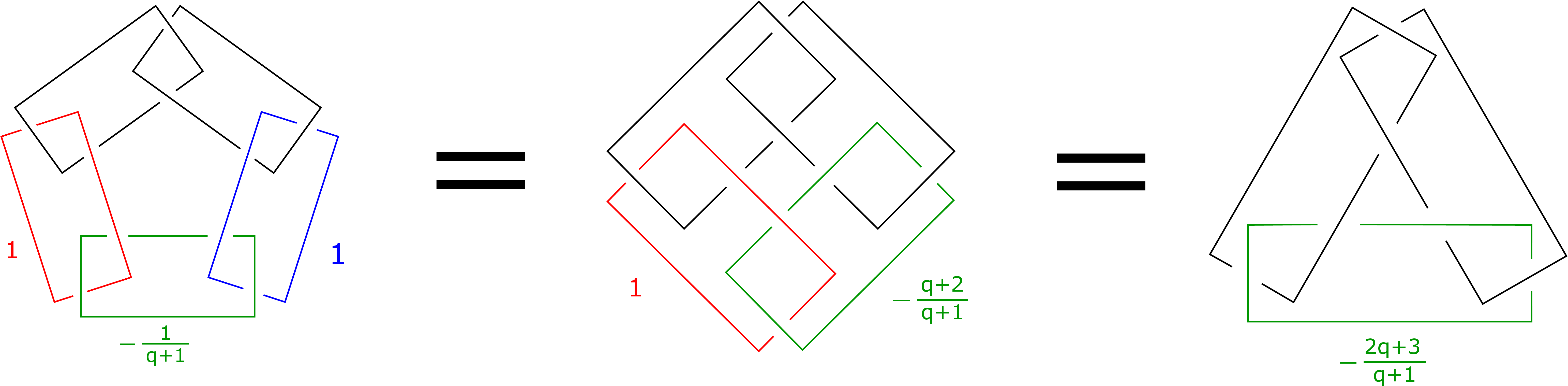}
 \end{center}
 \caption{A surgery description of $\overline{P_{2\cdot1+1,2q+1,2\cdot1+1}(0)\setminus T'}$}
 \label{hyp_p1r1}
\end{figure}

\subparagraph{3.2.2.3 The case where $-3\in \{p,q\}$}
If $q=-3$, take a surgery description in Figure~\ref{5chain_mirror} for $\overline{P_{2p+1,2(-3)+1,2\cdot1+1}(0)\setminus T'}$ and
in this case, we can change the diagram to a three component link as in Figure~\ref{hyp_qminus3r1}. 
Note that this is the resultant of the filling of L6a7 in Figure~\ref{3chain} with filling slope $(p+3,\phi,\phi)$. 
By Remark~\ref{onefilling}, we see that $\overline{P_{2p+1,2(-3)+1,2\cdot1+1}(0)\setminus T'}$ is hyperbolic if and only if $p\neq -3, -2, -1, 0$. 
The other cases, where $p=-3 ,-2,-1,0$ (under $q=-3$ and $r=1$) have already contained in the cases in subsection~\ref{nonhyp}. \\
If $p=-3$, we will change variables. 
Since $P_{2(-3)+1,2q+1,2\cdot1+1}$ is equivalent to $P_{2\cdot1+1,2(-3)+1,2q+1}$ and the standard Seifert surface $T$ is compatible with this equivalence, we consider $\overline{P_{2\cdot+1,2(-3)+1,2q+1}(0)\setminus T'}$ instead of $\overline{P_{2(-3)+1,2q+1,2\cdot1+1}(0)\setminus T'}$. 
Take a surgery description in Figure~\ref{5chain_mirror} for $\overline{P_{2\cdot1+1,2(-3)+1,2q+1}(0)\setminus T'}$. 
In this case, we can change the diagram to a three component link as in Figure~\ref{hyp_p1qminus3}. 
Note that the last diagram in Figure~\ref{hyp_p1qminus3} is the resultant of the filling of L6a7 in Figure~\ref{3chain} with filling slope $(\phi,\phi,q+3)$. 
By Remark~\ref{onefilling}, we see that $\overline{P_{2\cdot1+1,2(-3)+1,2q+1}(0)\setminus T'}$ is hyperbolic if and only if $q\neq -3, -2, -1, 0$. 
The other cases, where $r=-3 ,-2,-1,0$ (under $p=-3$ and $r=1$) have already contained in the cases in subsection~\ref{nonhyp}. 

\vspace{0.5cm}

As a result, we can conclude as follows. 
Suppose that $1\in \{p,q,r\}$. 
Then $\overline{P_{2p+1,2q+1,2r+1}(0)\setminus T'}$ is non-hyperbolic if and only if ``$-1\in \{p,q,r\}$'', ``$0\in \{p,q,r\}$'', ``$\{-2,1\}\subset \{p,q,r\}$'' or ``$\{p,q,r\}=\{-3,-3,1\}$''.

\begin{figure}[htbp]
 \begin{center}
  \includegraphics[width=150mm]{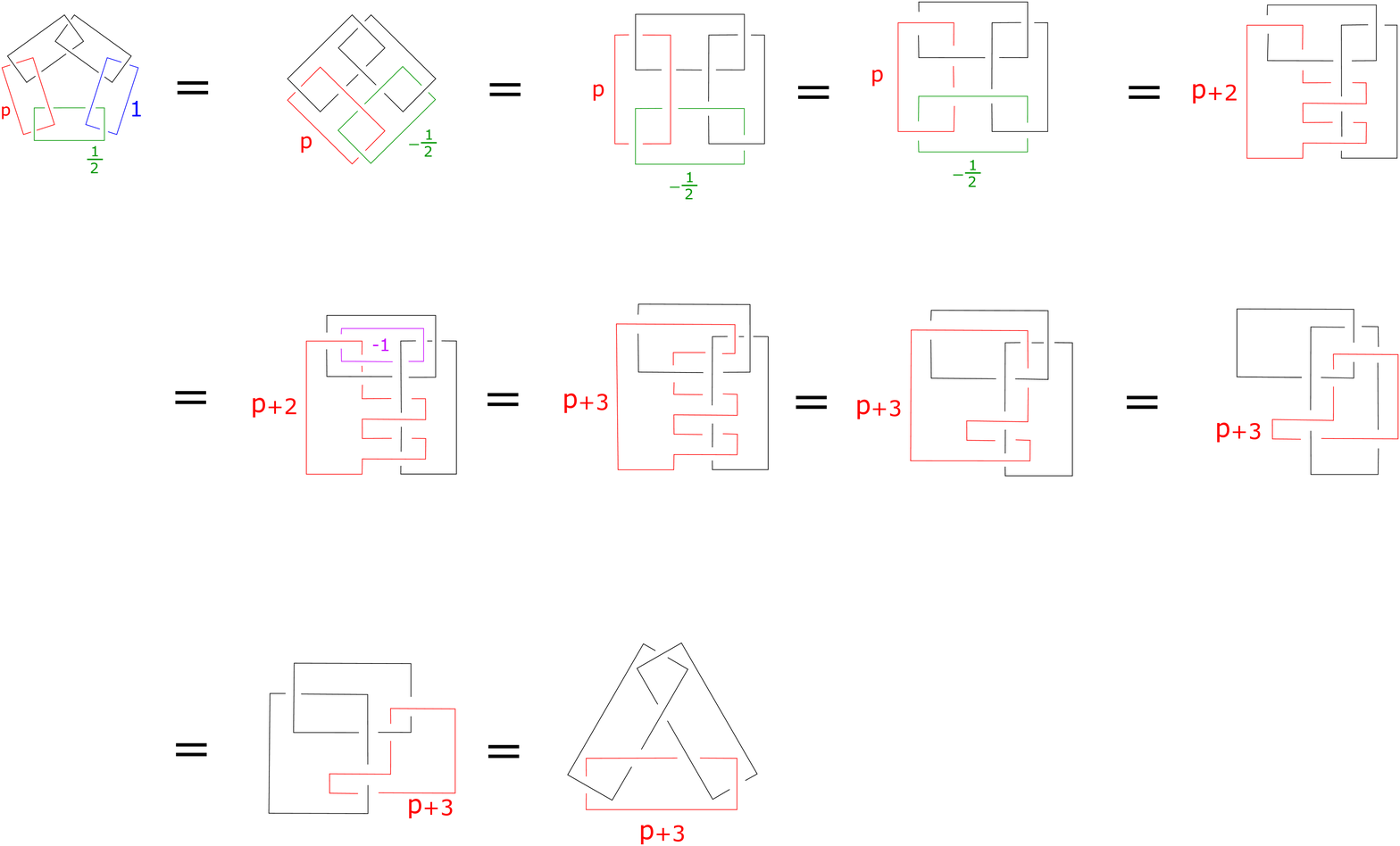}
 \end{center}
 \caption{A surgery description of $\overline{P_{2p+1,2(-3)+1,2\cdot1+1}(0)\setminus T'}$}
 \label{hyp_qminus3r1}
\end{figure}

\begin{figure}[htbp]
 \begin{center}
  \includegraphics[width=150mm]{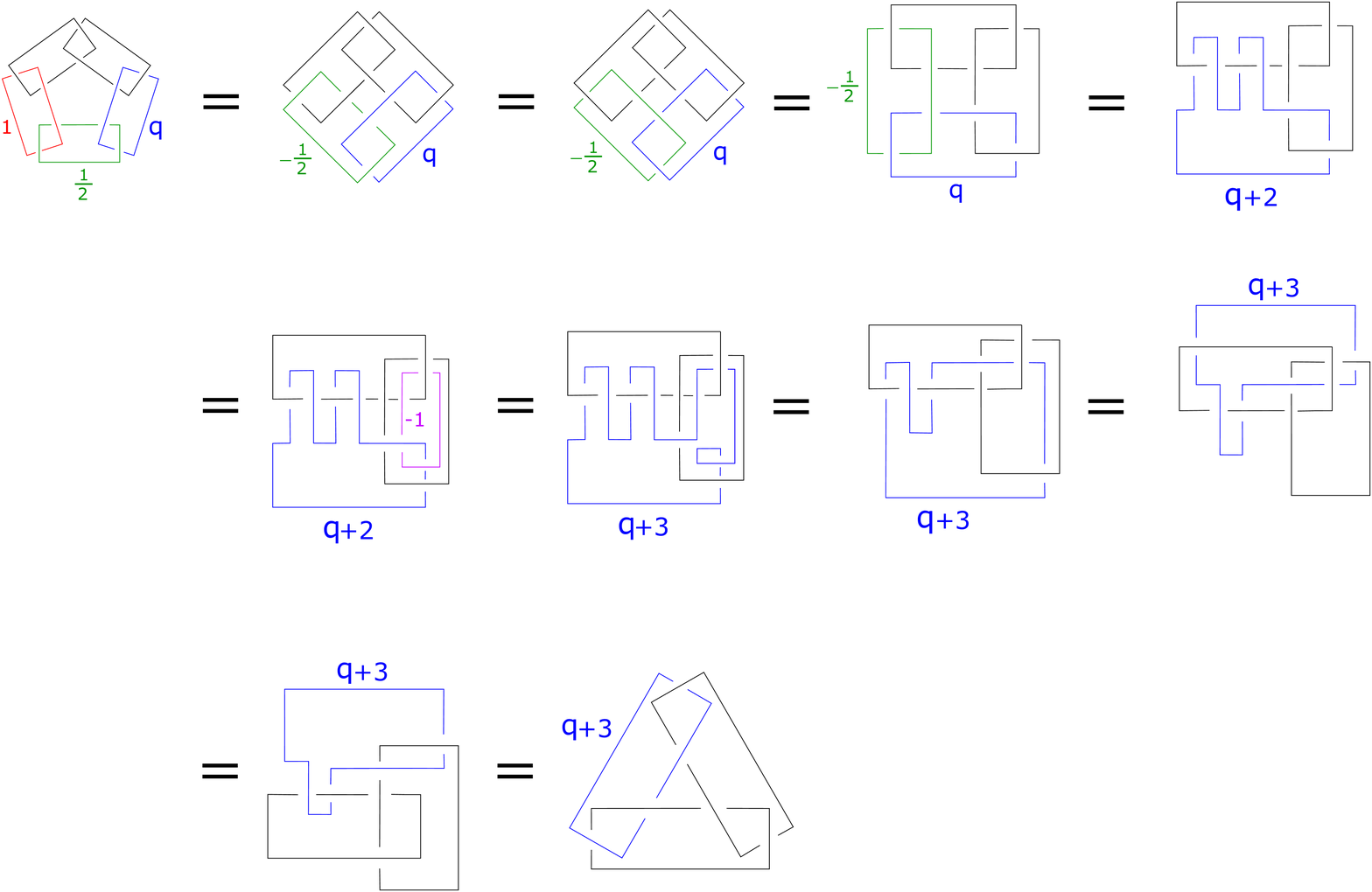}
 \end{center}
 \caption{A surgery description of $\overline{P_{2\cdot1+1,2(-3)+1,2q+1}(0)\setminus T'}$}
 \label{hyp_p1qminus3}
\end{figure}

\subsubsection{The case where $-2\in \{p,q,r\}$}
Since $P_{2p+1,2q+1,2r+1}$ is equivalent to $P_{2q+1,2r+1,2p+1}$ and the standard Seifert surface $T$ is compatible with this equivalence, we assume that $p=-2$. 
Recall that $P_{2(-2)+1,2q+1,2r+1}$ is the mirror of $P_{2\cdot1+1,2(-q-1)+1,2(-r-1)+1}$ and that the standard Seifert surface $T$ is compatible with this equivalence. 
Thus $\overline{P_{2(-2)+1,2q+1,2r+1}(0)\setminus T'}$ is orientation reversely homeomorphic to $\overline{P_{2\cdot1+1,2(-q-1)+1,2(-r-1)+1}(0)\setminus T'}$. 
By the argument for the case where $1\in \{p,q,r\}$, we see that $\overline{P_{2(-2)+1,2q+1,2r+1}\setminus T'}$ is non-hyperbolic if and only if ``$-1\in \{-q-1,-r-1\}$'', ``$0\in \{-q-1,-r-1\}$'', ``$-2 \in \{-q-1,-r-1\}$'' or ``$-q-1=-r-1=-3$''. 
Thus we have that under the assumption $-2\in \{p,q,r\}$, the 3-manifold $\overline{P_{2p+1,2q+1,2r+1}\setminus T'}$ is non-hyperbolic if and only if ``$-1\in\{p,q,r\}$'', ``$0\in\{p,q,r\}$'', ``$\{-2,1\}\subset \{p,q,r\}$'' or ``$\{p,q,r\}=\{-2,2,2\}$''. 

\vspace{0.5cm}
These arguments above finishes a proof of Proposition~\ref{proposition}.

\section{A proof of Theorem~\ref{main}} \label{secproof}
We recall the definition of JSJ-decompositions of 3-manifolds. 
\begin{defini} \label{jsj}(JSJ-decomposition \cite{jacoshalen} \cite{johannson})\\
Let $M$ be a prime compact 3-manifold with (possibly empty) incompressible boundary. 
There exists a set $\mathcal{T}$ consisting of pairwise non-parallel disjoint essential tori in $M$ such that
\begin{itemize}
\item each component obtained by cutting along the tori in $\mathcal{T}$ is either a Seifert manifold or an atoroidal manifold, and
\item there are no proper subset of $\mathcal{T}$ satisfying the above condition.
\end{itemize}
This decomposition is called the {\it JSJ-decomposition} of $M$. 
It is known that the set $\mathcal{T}$ is unique up to isotopy for a given $M$. 
By the Geometrization conjecture, an atoroidal piece of JSJ-decomposition admits a complete hyperbolic structure of finite volume. 
\end{defini}

In this section, we prove Theorem~\ref{main}, which determines the order of the set $\mathcal{T}$ of decomposing tori of the JSJ-decomposition of the 3-manifold $P_{2p+1,2q+1,2r+1}(0)$ obtained by $0$-surgery along a classical pretzel knot $P_{2p+1,2q+1,2r+1}$. 

At first, we must exclude the case where $P_{2p+1,2q+1,2r+1}$ is the unknot since the standard Seifert surface is not minimal genus in this case. 
This is done at Lemma~\ref{unknot}. 
It is well-known that the 3-manifold obtained by $0$-surgery along the unknot is $S^{2}\times S^1$, which admits a Seifert fibered structure. 
Then we have the following, which is the case $(1.1.1)$ of Theorem~\ref{main}: 
If $\{-1, 0\} \subset \{p,q,r\}$, then $\mathcal{T}=\phi$ and the entire manifold is $S^{2}\times S^1$. 
In the following, we assume that $P_{2p+1,2q+1,2r+1}$ is not the unknot. 

Next, we handle the special case where $P_{2p+1,2q+1,2r+1}$ is the trefoil. 
As is stated in \cite{ichihara}, the $0$-surgery along the trefoil is the only surgery producing a toroidal Seifert manifold among Dehn surgeries along Montesinos knots. 
Since the trefoil is a fibered knot of genus one, $\overline{P_{2p+1,2q+1,2r+1}(0)\setminus T'}$ is the product of torus and the interval if $P_{2p+1,2q+1,2r+1}$ is the trefoil. 
By using the descriptions in subsection~\ref{nonhyp}, we have $\{p,q,r\}=\{-1,\epsilon,\eta\}$ or $\{0,\epsilon-1,\eta-1\}$ for $\epsilon, \eta \in \{\pm1 \}$ if $P_{2p+1,2q+1,2r+1}$ is the trefoil. 
By drawing diagrams, we have $\{p,q,r\}=\{-1,\epsilon,\epsilon \}$ or $\{0,\epsilon-1,\epsilon-1\}$ for $\epsilon \in \{\pm 1\}$ if $P_{2p+1,2q+1,2r+1}$ is the trefoil. In the other cases, we get the figure eight knots. 
Since the trefoil is a fibered knot of genus one whose monodromy is periodic of order six, the 3-manifold obtained by $0$-surgery along the trefoil  is a closed torus bundle over $S^1$ whose monodromy is periodic of order six. 
Then we have the following, which is the case $(1.1.2)$ of Theorem~\ref{main}: 
If $\{p,q,r\}=\{-1,\epsilon,\epsilon\}$ or $\{0,\epsilon-1,\epsilon-1\}$ for $\epsilon \in \{\pm1\}$, then $\mathcal{T}=\phi$ and the entire manifold is a closed torus bundle over $S^1$ whose monodromy is periodic of order six, 
In the following, we assume that $P_{2p+1,2q+1,2r+1}$ is not the trefoil. 

Note that  the torus $T'$ obtained by capping off the standard Seifert surface $T$ is essential in $P_{2p+1,2q+1,2r+1}(0)$ since $P_{2p+1,2q+1,2r+1}$ is not the unknot, and that  
$P_{2p+1,2q+1,2r+1}(0)$ is not Seifert manifold since $P_{2p+1,2q+1,2r+1}$ is not the trefoil. Thus the set $\mathcal{T}$ of decomposing tori of the JSJ-decomposition of $P_{2p+1,2q+1,2r+1}(0)$ must contain $T'$. 
Cut $P_{2p+1,2q+1,2r+1}(0)$ along this $T'$ and continue to consider the JSJ-decomposition of $\overline{P_{2p+1,2q+1,2r+1}(0)\setminus T'}$. 
This has done in Proposition~\ref{proposition}.

\paragraph{The case (1.1.3) of Theorem~\ref{main}}
First, suppose that $\{p,q,r\}=\{-1,\epsilon,k\}$ for $\epsilon \in \{\pm1\}$ and some integer $k$ and not in the cases $(1.1.1)$ and $(1.1.2)$ of Theorem~\ref{main}. 
Then $\overline{P_{2p+1,2q+1,2r+1}(0)\setminus T'}$ is as in 3.1.1 of section 3. 
Therefore, we have that $|\mathcal{T}|=1$ and the decomposed piece is a Seifert manifold whose base surface is an annulus with one exceptional point of order $|k|$. 
Next, suppose that $\{p,q,r\}=\{0,\epsilon-1,k-1\}$ for $\epsilon \in \{\pm1\}$ and some integer $k$ and not in the cases $(1.1.1)$ and $(1.1.2)$ of Theorem~\ref{main}. 
Then $\overline{P_{2p+1,2q+1,2r+1}(0)\setminus T'}$ is as in 3.1.2 of section 3. 
Therefore, we have that $|\mathcal{T}|=1$ and the decomposed piece is a Seifert manifold whose base surface is an annulus with one exceptional point of order $|k|$. 

\paragraph{The case (1.1.4) of Theorem~\ref{main}}
First, suppose that $\{p,q,r\}=\{-1,m,n\}$ for some integers $m$ and $n$ and not in the cases $(1.1.1)$, $(1.1.2)$ and $(1.1.3)$ of Theorem~\ref{main}. 
Then $\overline{P_{2p+1,2q+1,2r+1}(0)\setminus T'}$ is as in 3.1.1 of section 3. 
Therefore, we have that $|\mathcal{T}|=2$ and the decomposed pieces are two Seifert manifolds each of whose base surfaces are annuli with one exceptional points of order $|m|$ and $|n|$. 
Next, suppose that $\{p,q,r\}=\{0,m-1,n-1\}$ for some integers $m$ and $n$ and not in the cases $(1.1.1)$, $(1.1.2)$ and $(1.1.3)$ of Theorem~\ref{main}. 
Then $\overline{P_{2p+1,2q+1,2r+1}(0)\setminus T'}$ is as in 3.1.2 of section 3. 
Therefore, we have that $|\mathcal{T}|=2$ and the decomposed pieces are two Seifert manifolds each of whose base surfaces are annuli with one exceptional points of order $|m|$ and $|n|$.

\paragraph{The case (1.1.5) of Theorem~\ref{main}}
Suppose that $\{-2,1\} \subset \{p,q,r\}$. 
Then $\overline{P_{2p+1,2q+1,2r+1}(0)\setminus T'}$ is as in 3.1.3 of section 3. 
Therefore, we have that $|\mathcal{T}|=1$ and the decomposed piece is a Seifert manifold whose base surface is an annulus with one exceptional point of order $2$. 

\paragraph{The case (1.1.6) of Theorem~\ref{main}}
First, suppose that $\{p,q,r\}=\{-2,2,2\}$. 
Then $\overline{P_{2p+1,2q+1,2r+1}(0)\setminus T'}$ is as in 3.1.4 of section 3. 
Therefore, we have that $|\mathcal{T}|=2$ and the decomposed pieces are two Seifert manifolds, one of which is $\Sigma_{0,3}\times S^1$ and the other is the complement of the trefoil. 
Next, suppose that $\{p,q,r\}=\{-3,-3,1\}$. 
Then $\overline{P_{2p+1,2q+1,2r+1}(0)\setminus T'}$ is as in 3.1.5 of section 3. 
Therefore, we have that $|\mathcal{T}|=2$ and the decomposed pieces are two Seifert manifolds, one of which is $\Sigma_{0,3}\times S^1$ and the other is the complement of the trefoil. 

\paragraph{The case (1.1.7) of Theorem~\ref{main}}
Suppose that $\{p,q,r\}$ is in none of the cases from $(1.1.1)$ to $(1.1.6)$ of Theorem~\ref{main}. 
Then by Proposition~\ref{proposition}, we have that $\overline{P_{2p+1,2q+1,2r+1}(0)\setminus T'}$ is hyperbolic. 
Therefore, we have that $|\mathcal{T}|=1$ and the decomposed piece is a hyperbolic manifold. 

\vspace{0.5cm}
These arguments above finishes a proof of Theorem~\ref{main}.

\end{document}